
%
\documentclass{amsart}
%
%
\usepackage{amsmath}
\usepackage{mathrsfs}
\usepackage{amsfonts}
\usepackage{amssymb,color}
\usepackage{graphicx}
\usepackage[square,numbers]{natbib}
\usepackage{verbatim}
\usepackage{enumerate}
%

\newtheorem{theorem}{Theorem}

\newtheorem{lemma}[theorem]{Lemma}
\newtheorem{proposition}[theorem]{Proposition}

\theoremstyle{definition}
\newtheorem{definition}[theorem]{Definition}

\theoremstyle{remark}
\newtheorem{remark}[theorem]{Remark}

\numberwithin{theorem}{section}
\numberwithin{equation}{section}

\def\Xint#1{\mathchoice
   {\XXint\displaystyle\textstyle{#1}}%
   {\XXint\textstyle\scriptstyle{#1}}%
   {\XXint\scriptstyle\scriptscriptstyle{#1}}%
   {\XXint\scriptscriptstyle\scriptscriptstyle{#1}}%
   \!\int}
\def\XXint#1#2#3{{\setbox0=\hbox{$#1{#2#3}{\int}$}
     \vcenter{\hbox{$#2#3$}}\kern-.5\wd0}}

\def\dashint{\Xint-}

\newcommand{\dd}{\; \mathrm{d}}

\newcommand{\bbH}{\mathbb{H}}
\newcommand{\bbR}{\mathbb{R}}
\newcommand{\bbN}{\mathbb{N}}

\DeclareMathOperator*{\aplim}{aplim}

\begin{document}
\title[Lusin Approximation in the Heisenberg Group]{A $C^m$ Lusin Approximation Theorem for Horizontal Curves in the Heisenberg Group}
\author[Marco Capolli]{Marco Capolli}
\address[Marco Capolli]{Department of Mathematics, University of Trento, Via Sommarive 14, 38123 Povo (Trento), Italy}
\email[Marco Capolli]{marco.capolli@unitn.it}

\author[Andrea Pinamonti]{Andrea Pinamonti}
\address[Andrea Pinamonti]{Department of Mathematics, University of Trento, Via Sommarive 14, 38123 Povo (Trento), Italy}
\email[Andrea Pinamonti]{Andrea.Pinamonti@unitn.it}

\author[Gareth Speight]{Gareth Speight}
\address[Gareth Speight]{Department of Mathematical Sciences, University of Cincinnati, 2815 Commons Way, Cincinnati, OH 45221, United States}
\email[Gareth Speight]{Gareth.Speight@uc.edu}

\begin{abstract}
We prove a $C^m$ Lusin approximation theorem for horizontal curves in the Heisenberg group. This states that every absolutely continuous horizontal curve whose horizontal velocity is $m-1$ times $L^1$ differentiable almost everywhere coincides with a $C^m$ horizontal curve except on a set of small measure. Conversely, we show that the result no longer holds if $L^1$ differentiability is replaced by approximate differentiability. This shows our result is optimal and highlights differences between the Heisenberg and Euclidean settings.
\end{abstract}

\maketitle


\section{Introduction}
In mathematical analysis it is often useful to understand when a rough map can be approximated by a smoother one. For instance, Lusin's theorem asserts that every measurable function on $\bbR^n$ is continuous after removing a set of small measure from the domain. Another useful result states that every absolutely continuous curve in $\mathbb{R}^{n}$ has the $1$-Lusin property, which means that it coincides with a $C^{1}$ curve except for a set of small measure (Theorem \ref{classicallusinC1}). The position and velocity of absolutely continuous curves are related according to the Fundamental Theorem of Calculus, so these curves are important in analysis and geometry. Concerning higher regularity, if a curve in $\bbR^n$ is approximately differentiable of order $m$ almost everywhere (Definition \ref{approxdiff}), then it has the $m$-Lusin property allowing approximation by $C^m$ curves (Theorem \ref{classicallusinCm}) \cite{LT94}. In the present article we study the $m$-Lusin property for horizontal curves in the Heisenberg group, a non-Euclidean space with much geometric structure. The key difference between the Heisenberg group and Euclidean space is that in the Heisenberg group both the initial and the approximating curve must be horizontal, which means they are constrained to move in a smaller but still rich, family of directions.

In recent years, it has become clear that a large part of geometric analysis, geometric measure theory and real analysis in Euclidean spaces may be generalized to more general settings, see for example \cite{BLU07, Che99, CDPT07, FSS01, FSS03, HKST15, LPS17,Mon02, MPS17, Pan89, PS16, PSsurvey,PS18}. Carnot groups are Lie groups whose Lie algebra admits a stratification. This stratification gives rise to dilations and implies that points can be connected by absolutely continuous curves with tangents in a distinguished subbundle of the tangent bundle. These are the so called horizontal curves. Considering lengths of horizontal curves gives rise to the Carnot-Carath\'{e}odory distance and endows every Carnot group with a metric space structure. Moreover, every Carnot group has a natural Haar measure which respects the group translations and dilations. This plethora of structure makes the study of analysis and geometry in Carnot groups highly interesting \cite{BLU07, CDPT07, Mon02}. However, results in the Carnot setting can be very different to Euclidean ones since all such results must respect the horizontal structure of the Carnot group. The Heisenberg group is the simplest non-Euclidean Carnot group and admits an explicit representation in $\bbR^{2n+1}$ (Definition \ref{Heisenberg}) with $2n$ horizontal directions and one vertical direction.

In the Heisenberg group, the $1$-Lusin property is known to be true for all absolutely continuous horizontal curves with the requirement that the approximating curve can be chosen both $C^1$ and horizontal. More precisely, every absolutely continuous horizontal curve can be approximated by a $C^1$ horizontal curve (Theorem \ref{heisenbergc1Lusin}) \cite{S16}. A similar result holds in step two Carnot groups \cite{LS16} and more general pliable Carnot groups \cite{JS16, SS18}. However the natural analogue is not true in the Engel group, a Carnot group of step three \cite{S16}. This highlights that the approximation depends on the space considered and Euclidean results do not always extend to the Carnot group setting. Proving that smooth approximations exist is closely connected to validity of a Whitney extension theorem. In Euclidean spaces, the Whitney extension theorem (Theorem \ref{classicalWhitney}) \cite{Bie80, Whi34} characterizes when a collection of continuous functions defined on a compact set can be extended to a $C^m$ function on some larger set. To prove a Lusin approximation result from a Whitney extension theorem, one typically restricts to a large compact set where the original mapping satisfies the hypotheses of the Whitney extension theorem and then obtains a $C^m$ mapping which agrees with the starting map on a large set. To apply this idea in the Heisenberg group it is important to have an analogue of the Whitney extension theorem for curves in the Heisenberg group. Such a theorem is indeed known for $C^1$ horizontal curves in the Heisenberg group \cite{Zim18} and in more general spaces \cite{JS16, SS18}. Very recently it was also understood for $C^m$ horizontal curves in the Heisenberg group \cite{PSZ19}.

In the present paper we focus on the $m$-Lusin property in the Heisenberg group (Definition \ref{Lusinm}), investigating which horizontal curves can be approximated by $C^m$ horizontal curves. We now describe our main results.

Our first main result is Theorem \ref{thm1}. This asserts that if $\Gamma=(f,g,h)$ is an absolutely continuous horizontal curve in $\bbH^1$ with $f', g'$ almost everywhere $m-1$ times $L^1$ differentiable (Definition \ref{Lpdiff}), then $\Gamma$ has the $m$-Lusin property. This should be compared with the previously known analogue in Euclidean space (Theorem \ref{classicallusinCm}), which has the weaker hypothesis that $f,g,h$ are $m$ times approximately differentiable almost everywhere (Definition \ref{approxdiff}). Our arguments adapt the proof of Theorem \ref{classicallusinCm} from \cite{LT94} to the Heisenberg group using our stronger hypothesis to restrict to a compact set on which we can apply the $C^m$ Whitney extension theorem for horizontal curves in the Heisenberg group (Theorem \ref{CmWhitney}) recently proved \cite{PSZ19}.

Our second main result is Theorem \ref{thm2}, which illustrates the difference between the Heisenberg setting and the Euclidean setting. It also justifies the hypotheses of Theorem \ref{thm1}. In Theorem \ref{thm2} we construct an absolutely continuous horizontal curve $\Gamma$ in $\bbH^1$ such that $f,g,h$ are almost everywhere twice $L^p$ differentiable for all $p\geq 1$, $f',g',h'$ are almost everywhere once approximately differentiable, yet $\Gamma$ does not have the $2$-Lusin approximation property. This shows that the Euclidean hypothesis of twice approximate differentiability is not sufficient in $\bbH^1$ and that one really needs to assume differentiability properties of the derivatives $f',g',h'$ rather than only on $f,g,h$. Our argument is an explicit construction of a horizontal curve.

In the main results of this paper we restrict our attention to the first Heisenberg group $\bbH=\bbH^{1}$. We expect the natural analogue of Theorem \ref{thm1} is also true in higher dimensional Heisenberg groups $\bbH^{n}$ with similar proofs but more cumbersome notation. 

We now describe the structure of the paper. In Section \ref{Preliminaries} we recall key definitions involving the Heisenberg group, approximate derivatives, $L^p$ derivatives, $m$-Lusin property and Whitney extension theorems. In Section \ref{Facts} we prove preliminary results describing how $L^1$ differentiability behaves under integration or lifting to a horizontal curve and how approximate differentiability almost everywhere can be used to obtain a Whitney field. In Section \ref{Sectionthm1} we prove our first main result (Theorem \ref{thm1}). Finally in Section \ref{Sectionthm2} we prove our second main result (Theorem \ref{thm2}).

\vspace{.3cm}

\textbf{Acknowledgements:} Part of this work was done while A. Pinamonti and M. Capolli were visiting the University of Cincinnati (supported by funding from the Taft Research Center and the University of Trento) and while G. Speight was visiting the University of Trento (supported by funding from the University of Trento). A. Pinamonti and M. Capolli are members {\em Gruppo Nazionale per l'Analisi Ma\-te\-ma\-ti\-ca, la Probabilit\`a e le loro Applicazioni} (GNAMPA) of the {\em Istituto Nazionale di Alta Matematica} (INdAM). This work was also supported by a grant from the Simons Foundation (\#576219, G. Speight). The authors thank the referee for his/her valuable comments on the manuscript.

\section{Preliminaries}\label{Preliminaries}

\subsection{Heisenberg Group and Horizontal Curves}

\begin{definition}\label{Heisenberg}
The \emph{Heisenberg group} $\mathbb{H}^{n}$ is the Lie group represented in coordinates by $\mathbb{R}^{2n+1}$, whose points we denote by $(x,y,t)$ with $x,y\in \mathbb{R}^{n}$ and $t\in \mathbb{R}$. The group law is given by:
\[(x,y,t) (x',y',t')=\left(x+x',y+y',t+t'+2\sum_{i=1}^{n}(y_{i}x_{i}'-x_{i}y_{i}')\right).\]
\end{definition}

We equip $\mathbb{H}^{n}$ with left invariant vector fields
\[X_{i}=\partial_{x_{i}}+2y_{i}\partial_{t}, \quad Y_{i} = \partial_{y_{i}}-2x_{i}\partial_{t}, \quad 1\leq i\leq n, \quad T=\partial_{t}.\]
Here $\partial_{x_{i}}, \partial_{y_{i}}$ and $\partial_{t}$ denote the coordinate vectors in $\mathbb{R}^{2n+1}$, which may be interpreted as operators on differentiable functions. If $[\cdot, \cdot]$ denotes the Lie bracket of vector fields, then $[X_{i}, Y_{i}]=-4T$. Thus $\mathbb{H}^{n}$ is a Carnot group with horizontal layer $\mathrm{Span}\{X_{i}, Y_{i} : 1\leq i\leq n\}$ and second layer $\mathrm{Span}\{T\}$. In this paper we will mostly restrict ourselves to the first Heisenberg group $\bbH^1$ which we also denote by $\bbH$.

\begin{definition}
A vector in $\mathbb{R}^{2n+1}$ is \emph{horizontal} at $p \in \mathbb{R}^{2n+1}$ if it is a linear combination of the vectors $X_{i}(p), Y_{i}(p), 1\leq i\leq n$.

An absolutely continuous curve $\gamma$ in the Heisenberg group is \emph{horizontal} if, at almost every point $t$, the derivative $\gamma'(t)$ is horizontal at $\gamma(t)$.
\end{definition}

\begin{lemma}\label{lift}
An absolutely continuous curve $\gamma\colon [a,b]\to \mathbb{R}^{2n+1}$ is a horizontal curve in the Heisenberg group if and only if, for $t\in [a,b]$:
\[\gamma_{2n+1}(t)=\gamma_{2n+1}(a)+2\sum_{i=1}^{n}\int_{a}^{t} (\gamma_{i}'\gamma_{n+i}-\gamma_{n+i}'\gamma_{i}).\]
\end{lemma}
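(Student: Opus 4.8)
The plan is to translate the geometric horizontality condition into a pointwise scalar identity for the derivative of the last coordinate $\gamma_{2n+1}$, and then to pass between this identity and the integral formula using the Fundamental Theorem of Calculus for absolutely continuous functions.

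First I would make horizontality explicit in coordinates. Writing a tangent vector at a point $p=(x,y,t)$ in the basis $\{X_i(p),Y_i(p)\}$, one computes that $v=\sum_i a_i X_i(p)+\sum_i b_i Y_i(p)$ has $x_i$-component $a_i$, $y_i$-component $b_i$, and $t$-component $2\sum_i(a_i y_i - b_i x_i)$. Hence $v$ is horizontal at $p$ precisely when its last component satisfies $v_{t}=2\sum_{i}(v_{x_i}y_i - v_{y_i}x_i)$. Applying this at $p=\gamma(s)$ to $v=\gamma'(s)$, and recalling that the coordinates are $x_i=\gamma_i$ and $y_i=\gamma_{n+i}$, the curve $\gamma$ is horizontal if and only if
\[\gamma_{2n+1}'(s)=2\sum_{i=1}^{n}\big(\gamma_i'(s)\gamma_{n+i}(s)-\gamma_{n+i}'(s)\gamma_i(s)\big)\]
for almost every $s\in[a,b]$.

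Next I would invoke absolute continuity. Since $\gamma$ is absolutely continuous, so is each component, and in particular $\gamma_{2n+1}(t)=\gamma_{2n+1}(a)+\int_a^t \gamma_{2n+1}'$ for every $t$. If $\gamma$ is horizontal, substituting the pointwise identity above into this equation gives exactly the claimed formula. For the converse I would start from the integral formula and differentiate it. The one point requiring care is that the integrand $\gamma_i'\gamma_{n+i}-\gamma_{n+i}'\gamma_i$ lies in $L^1([a,b])$: each $\gamma_j$ is continuous, hence bounded on the compact interval, while each $\gamma_j'$ is integrable because $\gamma_j$ is absolutely continuous, so the products are integrable. Consequently the right-hand side is an absolutely continuous function of $t$ whose derivative almost everywhere equals the integrand, and comparing with $\gamma_{2n+1}(t)=\gamma_{2n+1}(a)+\int_a^t \gamma_{2n+1}'$ recovers the pointwise horizontality identity almost everywhere.

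The argument is essentially bookkeeping, so I do not expect a serious obstacle. The step that deserves attention is the integrability of the products $\gamma_j'\gamma_k$, which is what guarantees the Fundamental Theorem of Calculus may be applied in both directions; without it the differentiation step in the converse would not be justified. Everything else reduces to the explicit computation of the horizontal constraint in coordinates and a direct application of the Fundamental Theorem of Calculus.
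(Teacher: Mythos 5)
Your proof is correct and is exactly the standard argument: the paper states Lemma \ref{lift} without proof, but your reduction of horizontality to the pointwise identity $\gamma_{2n+1}'=2\sum_i(\gamma_i'\gamma_{n+i}-\gamma_{n+i}'\gamma_i)$ a.e.\ (using that $X_i,Y_i,T$ form a basis at each point) followed by the Fundamental Theorem of Calculus for absolutely continuous functions is precisely what the paper implicitly relies on when it derives \eqref{liftH'}. The integrability check for $\gamma_j'\gamma_k$ that you flag is the right point to verify, and your justification of it is correct.
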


We will use Lemma \ref{lift} repeatedly throughout the paper. In the first Heisenberg group $\bbH=\bbH^1$, the relevant equations for an absolutely continuous curve to be horizontal simplify to
\begin{equation}\label{liftH}
\gamma_{3}(t)=\gamma_{3}(a)+2\int_{a}^{t} (\gamma_{1}'\gamma_{2}-\gamma_{2}'\gamma_{1}).
\end{equation}

Clearly Lemma \ref{lift} implies that for any horizontal curve $\gamma$ we have
\begin{equation}\label{liftH'}
\gamma_{2n+1}'(t) = 2\sum_{i=1}^{n} (\gamma_{i}'(t)\gamma_{n+i}(t)-\gamma_{n+i}'(t)\gamma_{i}(t)) 
\quad
\text{for a.e. } t \in [a,b].
\end{equation}
If we assume that $\gamma$ is $C^1$, this equality holds for every $t \in [a,b]$. If we further assume that $\gamma$ is $C^m$ for some $m > 1$, then, for $1 \leq k \leq m$, we may write 
\begin{equation}\label{HigherHoriz}
D^k\gamma_{2n+1}(t) = \sum_{i=1}^{n} \mathcal{P}^k\left(\gamma_{i}(t),\gamma_{n+i}(t),\gamma_{i}'(t),\gamma_{n+i}'(t),\dots,D^k\gamma_{i}(t),D^k\gamma_{n+i}(t)\right)
\end{equation}
for all $t\in [a,b]$ where $\mathcal{P}^k$ is a polynomial determined by the Leibniz rule. For a $C^m$ horizontal curve $\gamma$ in the first Heisenberg group $\bbH=\bbH^1$, the equations simplify to
\begin{equation}\label{HigherHoriz}
\gamma_{3}^{k}=2\sum_{i=0}^{k-1}  {{k-1}\choose{i}} (\gamma_{1}^{k-i}\gamma_{2}^{i}-\gamma_{2}^{k-i}\gamma_{1}^{i}) \qquad \mbox{for }1\leq k\leq m.
\end{equation}

A classical result about the $1$-Lusin property in Euclidean spaces can be found for example in \cite{AT04, EG91}.

\begin{theorem}\label{classicallusinC1}
Let $\gamma\colon [a,b]\to \bbR^n$ be absolutely continuous. Then for every $\varepsilon>0$ there exists a $C^1$ map $\Gamma\colon [a,b]\to \bbR^{n}$ such that
\[\mathcal{L}^1(\{ x\in [a,b]: \Gamma(t)\neq \gamma(t)\} )<\varepsilon.\]
\end{theorem}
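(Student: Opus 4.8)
The plan is to prove the classical $1$-Lusin property for absolutely continuous curves in $\bbR^n$ by reducing to the scalar case and exploiting the almost-everywhere differentiability that absolute continuity provides, together with Lusin's theorem applied to the derivative. First I would work componentwise: since $\gamma = (\gamma_1,\dots,\gamma_n)$ is absolutely continuous if and only if each $\gamma_i$ is, and since a finite union of small sets is small, it suffices to treat each scalar component separately, replacing $\varepsilon$ by $\varepsilon/n$. So I reduce to proving the statement for an absolutely continuous $\gamma\colon[a,b]\to\bbR$.

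\smallskip

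For the scalar case, absolute continuity gives that $\gamma$ is differentiable almost everywhere with $\gamma'\in L^1$, and $\gamma(t)=\gamma(a)+\int_a^t\gamma'$. The key step is to produce a closed set $K\subseteq[a,b]$ with $\mathcal{L}^1([a,b]\setminus K)<\varepsilon$ on which $\gamma'$ is well controlled, then invoke a Whitney-type extension. Concretely, by Lusin's theorem applied to $\gamma'$ there is a closed set $K$ with small complement on which $\gamma'$ coincides with a continuous function. The heart of the matter is to upgrade this so that on $K$ the pair $(\gamma,\gamma')$ behaves like a first-order Taylor field: one needs that for $x,y\in K$ the increment $\gamma(y)-\gamma(x)-\gamma'(x)(y-x)$ is $o(|y-x|)$ uniformly. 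This is where the measure-theoretic input is essential: using that at Lebesgue density points of $K$ (which have full measure in $K$) the difference quotients converge, and that $\gamma' $ restricted to $K$ is continuous, I would further shrink $K$ to a closed subset on which the Whitney first-order compatibility condition holds uniformly. This uses the Vitali covering / density argument to control $\int_x^y|\gamma'-\gamma'(x)|$ on $K$.

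\smallskip

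Once such a $K$ is in hand, I would apply the classical $C^1$ Whitney extension theorem to the field $(\gamma|_K,\gamma'|_K)$ to obtain a $C^1$ function $\Gamma\colon[a,b]\to\bbR$ with $\Gamma=\gamma$ and $\Gamma'=\gamma'$ on $K$. Reassembling the $n$ components yields a $C^1$ curve $\Gamma\colon[a,b]\to\bbR^n$ agreeing with $\gamma$ off a set of measure less than $\varepsilon$.

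\smallskip

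I expect the main obstacle to be verifying the uniform first-order Whitney condition on the compact set $K$, namely that the remainder $\gamma(y)-\gamma(x)-\gamma'(x)(y-x)=o(|y-x|)$ uniformly for $x,y\in K$. Pointwise differentiability only gives this for fixed $x$ as $y\to x$, not uniformly, and $y$ need not be a density point. Overcoming this requires combining Lebesgue density of points of $K$, continuity of $\gamma'|_K$, and absolute integrability of $\gamma'$ to estimate $\gamma(y)-\gamma(x)=\int_x^y\gamma'$ against $\gamma'(x)(y-x)$; the technical core is an Egorov/Vitali argument passing from almost-everywhere convergence of difference quotients to uniform control after discarding a further set of small measure.
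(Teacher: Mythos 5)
The paper does not actually prove this theorem: it is quoted as a classical fact with a pointer to \cite{AT04, EG91}, so there is no in-house argument to compare against. Your proposal is nevertheless a correct and standard route: componentwise reduction, Lusin's theorem for $\gamma'$, verification of the first-order Whitney condition on a compact set $K$ of nearly full measure, and the classical Whitney extension theorem (Theorem \ref{classicalWhitney}). You have correctly identified the only genuine difficulty, but one clarification is worth making about how to resolve it. Writing
\[
\gamma(y)-\gamma(x)-\gamma'(x)(y-x)=\int_x^y\bigl(\gamma'(t)-\gamma'(x)\bigr)\dd t,
\]
splitting $[x,y]$ into $K\cap[x,y]$ and its complement is not quite enough: Lebesgue density of $x$ in $K$ controls $\mathcal{L}^1([x,y]\setminus K)$ but not $\int_{[x,y]\setminus K}|\gamma'|$, since $\gamma'$ may be unbounded off $K$. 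The clean fix, which you gesture at in your final paragraph, is to work at $L^1$-Lebesgue points of $\gamma'$, where $\dashint_x^y|\gamma'(t)-\gamma'(x)|\dd t\to 0$ as $y\to x$; this holds almost everywhere, and applying Egorov to the measurable functions $x\mapsto\sup_{0<|y-x|\le\rho}\dashint_x^y|\gamma'-\gamma'(x)|$ as $\rho\downarrow 0$ yields the required uniformity on a further compact subset. Combined with uniform continuity of $\gamma'|_K$, this verifies Definition \ref{whitneyfield} with $m=1$, and the rest of your argument goes through. It is worth noting that this is precisely the mechanism the authors deploy at higher order in Theorem \ref{thm1}, where the $L^1$-Lebesgue point condition is strengthened to $(m-1)$-fold $L^1$ differentiability of the derivative (compare \eqref{uniformL1}), so your outline is the $m=1$ shadow of their main construction.
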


%
%
%

We also recall the the analogous result for the $1$-Lusin property in Heisenberg groups \cite{S16}.

\begin{theorem}\label{heisenbergc1Lusin}
Absolutely continuous horizontal curves in $\bbH^n$ have the $1$-Lusin property.
\end{theorem}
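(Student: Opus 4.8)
The plan is to reduce the statement to a large compact subset of the parameter interval on which one may invoke the $C^1$ horizontal Whitney extension theorem (the $m=1$ case of Theorem \ref{CmWhitney}, established in \cite{Zim18}). Write $\gamma=(\gamma_1,\dots,\gamma_{2n},\gamma_{2n+1})\colon[a,b]\to\bbH^n$ for the given absolutely continuous horizontal curve. Since $\gamma$ is absolutely continuous, each horizontal velocity $\gamma_i'$ with $1\le i\le 2n$ lies in $L^1([a,b])$. Fix $\varepsilon>0$. By Lusin's theorem together with the Lebesgue differentiation theorem I would select a compact set $K\subseteq[a,b]$ with $\mathcal L^1([a,b]\setminus K)<\varepsilon$ such that the restriction of $(\gamma_1',\dots,\gamma_{2n}')$ to $K$ is continuous and every point of $K$ is simultaneously a density point of $K$ and a Lebesgue point of each $\gamma_i'$. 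It then suffices to produce a $C^1$ horizontal curve $\Gamma$ agreeing with $\gamma$ to first order on $K$, since then $\{\Gamma\neq\gamma\}\subseteq[a,b]\setminus K$ has measure less than $\varepsilon$.

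The data on $K$ form a candidate Whitney field: to each $s\in K$ one assigns the position $\gamma(s)$ and the velocity $\gamma'(s)$, where, by horizontality and \eqref{liftH'}, the vertical velocity $\gamma_{2n+1}'(s)=2\sum_{i=1}^{n}(\gamma_i'(s)\gamma_{n+i}(s)-\gamma_{n+i}'(s)\gamma_i(s))$ is continuous on $K$. To apply the extension theorem I must verify its two compatibility conditions. The first is the usual first-order Taylor condition on the horizontal components,
\[
\gamma_i(t)-\gamma_i(s)-\gamma_i'(s)(t-s)=o(|t-s|)\qquad\text{uniformly for }s,t\in K,
\]
which is immediate from absolute continuity, the continuity of $\gamma_i'|_K$, and the Lebesgue point property.

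The delicate point, and the step I expect to be the main obstacle, is the vertical compatibility condition: in left-invariant form it requires that the vertical coordinate of the group increment $\gamma(s)^{-1}\gamma(t)$ be $o(|t-s|)$ uniformly for $s,t\in K$. A direct computation with the group law of Definition \ref{Heisenberg} shows this coordinate equals $\gamma_{2n+1}(t)-\gamma_{2n+1}(s)+2\sum_{i}\big(\gamma_i(s)(\gamma_{n+i}(t)-\gamma_{n+i}(s))-\gamma_{n+i}(s)(\gamma_i(t)-\gamma_i(s))\big)$, and substituting the lift formula \eqref{liftH} and writing each increment as an integral of the velocity rewrites it as
\[
2\sum_{i=1}^{n}\int_s^t\big(\gamma_i'(\gamma_{n+i}-\gamma_{n+i}(s))-\gamma_{n+i}'(\gamma_i-\gamma_i(s))\big).
\]
This quantity is controlled by $\big(\int_s^t|\gamma'|\big)^2$, and because $s,t$ are Lebesgue points of the velocity one has $\int_s^t|\gamma'|=O(|t-s|)$ as $|t-s|\to0$, so the expression is $O(|t-s|^2)=o(|t-s|)$; the continuity of $\gamma'|_K$ upgrades this to the required uniform estimate on $K$. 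It is precisely here that the Heisenberg structure is essential: the vertical excess generated over short subintervals is quadratically small, so the constraint can be met, whereas the analogous strategy fails in higher step groups such as the Engel group, where no corresponding $C^1$ horizontal Whitney extension theorem is available. With both conditions verified, the extension theorem furnishes a $C^1$ horizontal curve $\Gamma$ agreeing with $\gamma$ on $K$, which completes the proof in view of Theorem \ref{classicallusinC1} and Lemma \ref{lift}.
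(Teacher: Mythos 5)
The paper does not actually prove this theorem; it is recalled from \cite{S16}. So I can only assess your argument on its own terms. Your overall strategy (restrict to a large compact set of Lebesgue points where the horizontal velocity is continuous, verify the hypotheses of the $C^1$ horizontal Whitney extension theorem, extend) is a reasonable route, and your computation of the vertical coordinate of $\gamma(s)^{-1}\gamma(t)$ as $2\sum_i\int_s^t\bigl(\gamma_i'(\gamma_{n+i}-\gamma_{n+i}(s))-\gamma_{n+i}'(\gamma_i-\gamma_i(s))\bigr)$ is correct.

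The gap is in the rate you verify for the vertical condition. You assert the extension theorem requires this quantity to be $o(|t-s|)$ and conclude from the crude bound $\bigl(\int_s^t|\gamma'|\bigr)^2=O(|t-s|^2)$. But the actual condition is the $m=1$ case of Theorem \ref{CmWhitney}(3): $A(s,t)/V(s,t)\to 0$, where for $m=1$ one has $(T_s^1F)'\equiv F^1(s)$ and hence $V(s,t)=(t-s)^2\bigl(1+|F^1(s)|+|G^1(s)|\bigr)\asymp (t-s)^2$ on a set where the velocities are bounded. So you must show the vertical increment is $o(|t-s|^2)$, and $O(|t-s|^2)$ does not suffice; the constant in your big-$O$ is controlled by $\|\gamma'\|_\infty$ on $K$ and does not tend to zero. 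This quadratic threshold is exactly the heart of the matter: if $o(|t-s|)$ were enough, the obstruction exploited in Theorem \ref{thm2} (vertical area of order $h_{n+1}^2$ with $4^n h_{n+1}\to\infty$, swept over an interval of length $\lesssim 2^{-n}$) would be invisible. To repair the step you must use the antisymmetric structure rather than absolute values: writing $\gamma_i(u)-\gamma_i(s)=\gamma_i'(s)(u-s)+\int_s^u(\gamma_i'-\gamma_i'(s))$, the leading contributions to $\gamma_i'(\gamma_{n+i}-\gamma_{n+i}(s))-\gamma_{n+i}'(\gamma_i-\gamma_i(s))$ cancel, and the remainders are bounded by $\bigl(|t-s|\,\|\gamma'\|_{L^\infty(K)}+\int_s^t|\gamma'|\bigr)\cdot\max_i\int_s^t|\gamma_i'-\gamma_i'(s)|$, which is $o(|t-s|^2)$ at a Lebesgue point of $\gamma'$. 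A secondary issue is uniformity: continuity of $\gamma'|_K$ does not upgrade these pointwise little-$o$ estimates to uniform ones over $s,t\in K$; you need an Egorov-type selection of a further compact subset, in the spirit of the sets $B_n$ in Proposition \ref{approxWhitney} and property (2) in the proof of Theorem \ref{thm1}.
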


It is also known that absolutely continuous horizontal curves have the $1$-Lusin property in step two Carnot groups \cite{LS16}, in pliable Carnot groups \cite{JS16} and in suitable sub-Riemannian manifolds \cite{SS18}.

\subsection{Approximate Differentiability and Integral Differentiability}

Recall that if $f\colon \bbR^{d}\to \bbR$, $x\in \bbR^{d}$ and $l\in \bbR$, then $\aplim_{y\to x}f(x) = l$ means that for every $\varepsilon>0$ the set
\[\{ y\in \bbR^d : |f(y)-l|\leq \epsilon \}\]
has density one at $x$, i.e.
\[ \lim_{R\to 0} \frac{\mathcal{L}^{d}(B(x,R)\cap \{ y\in \bbR^d : |f(y)-l|\leq \epsilon \})}{\mathcal{L}^{d}(B(x,R))}=1.\]

\begin{definition}\label{approxdiff}
Given $x\in\bbR^d$ and $k\in \bbN$, we say that a function $u\colon \bbR^{d}\to \bbR$ is \emph{$m$ times approximately differentiable at $x$} if there exists a polynomial $P^m_{u,x}$ of degree at most $m$ such that
\begin{equation}\label{eq.approxdiff}
    \aplim_{y\to x}\frac{|u(y)-P^m_{u,x}(y)|}{|y-x|^m} = 0.
\end{equation}
\end{definition}

\begin{remark}
The polynomial $P^m_{u,x}$ in Definition \ref{approxdiff} is uniquely determined and can be expressed in the form
\begin{equation}\label{eq.approxpolyn}
P^m_{u,x}(y) = \sum_{|\alpha|\leq m} \frac{u_{\alpha}(x)}{|\alpha|!}(y-x)^{\alpha}
\end{equation}
for some $u_{\alpha}(x)\in \mathbb{R}$ \cite{LT94}.
\end{remark}

As a special case of a recent result from \cite{LT94} we get a $C^m$ version of the Lusin property.

\begin{theorem}\label{classicallusinCm}
Suppose $\gamma \colon [a,b]\to \bbR^n$ is measurable and approximately differentiable of order $m$ almost everywhere. Then for every $\varepsilon>0$ there exists a $C^m$ map $\Gamma\colon [a,b]\to \bbR^{n}$ such that
\[\mathcal{L}^1(\{ x\in [a,b]: \Gamma(t)\neq \gamma(t)\} )<\varepsilon.\]
\end{theorem}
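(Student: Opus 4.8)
The plan is to reduce the statement to the classical Whitney extension theorem (Theorem~\ref{classicalWhitney}) by producing a large compact set on which the approximate derivatives of $\gamma$ constitute a Whitney field of order $m$. Since one may argue componentwise and then intersect the resulting sets, I would first assume $n=1$, so $\gamma\colon[a,b]\to\bbR$. Let $A\subseteq[a,b]$ be the full-measure set on which $\gamma$ is approximately differentiable of order $m$, and for $x\in A$ and $0\le k\le m$ let $\gamma_k(x)$ be the coefficient $u_k(x)$ appearing in the representation \eqref{eq.approxpolyn} of $P^m_{\gamma,x}$, so that $\gamma_0=\gamma$, $P^m_{\gamma,x}(y)=\sum_{k=0}^m\frac{\gamma_k(x)}{k!}(y-x)^k$, and by \eqref{eq.approxdiff}
\[\aplim_{y\to x}\frac{1}{|y-x|^m}\Bigl|\gamma(y)-\sum_{k=0}^m\frac{\gamma_k(x)}{k!}(y-x)^k\Bigr|=0.\]
Each $\gamma_k$ is measurable, being obtained as an approximate limit of iterated difference quotients.

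Next I would uniformize the approximate differentiability. Fix a small density threshold $\tau>0$ depending only on $m$, and for $\eta>0$ and $x\in A$ set
\[r_\eta(x)=\sup\Bigl\{r>0:\mathcal L^1\bigl(\{y\in B(x,s):|\gamma(y)-P^m_{\gamma,x}(y)|>\eta|y-x|^m\}\bigr)\le\tau\cdot 2s\ \text{for all } s\le r\Bigr\}.\]
By \eqref{eq.approxdiff} we have $r_\eta(x)>0$ for a.e.\ $x$, and $r_\eta$ is measurable. Applying Lusin's theorem to the $\gamma_k$ together with the fact that a measurable, a.e.-positive function is bounded below by a positive constant off a set of arbitrarily small measure, I would extract, for a fixed sequence $\eta_j\downarrow0$ and with total error $<\varepsilon$, a compact set $K\subseteq[a,b]$ with $\mathcal L^1([a,b]\setminus K)<\varepsilon$ such that every $\gamma_k$ is continuous on $K$ and $\inf_{x\in K}r_{\eta_j}(x)=:\delta_j>0$ for each $j$.

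The core of the argument is to verify the Whitney conditions on $K$: writing $R_k(x,y)=\gamma_k(x)-\sum_{j=0}^{m-k}\frac{\gamma_{k+j}(y)}{j!}(x-y)^j$, I must show $|R_k(x,y)|=o(|x-y|^{m-k})$ uniformly for $x,y\in K$. Fix $x,y\in K$ with $r=|x-y|$ small and let $j$ be the largest index with $Cr\le\delta_j$, so that $\eta_j\to0$ as $r\to0$. I would choose $m+1$ disjoint subintervals $I_0,\dots,I_m$, each of length comparable to $r$, contained in $B(x,Cr)\cap B(y,Cr)$ and separated from $x$ and $y$. The density bound at threshold $\eta_j$, valid at both $x$ and $y$ since $Cr\le\delta_j$, together with the smallness of $\tau$ forces in each $I_i$ the set of points that are simultaneously $\eta_j$-good for $x$ and for $y$ to have positive measure; pick such a point $z_i$. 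For the polynomial $Q=P^m_{\gamma,x}-P^m_{\gamma,y}$ of degree $\le m$, writing $Q(z)=\sum_{k=0}^m a_k(z-x)^k$, the nodes give $|Q(z_i)|\le 2\eta_j(|z_i-x|^m+|z_i-y|^m)\le C\eta_j r^m$ at $m+1$ points whose offsets $z_i-x$ are multiples of $r$ with distinct, separated ratios; inverting the associated Vandermonde system yields $|a_k|\le C\eta_j r^{m-k}$. Since a short computation shows $a_k=R_k(x,y)/k!$, this is exactly $|R_k(x,y)|\le C'\eta_j r^{m-k}$, the required uniform $o(|x-y|^{m-k})$.

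With the Whitney conditions in hand, Theorem~\ref{classicalWhitney} supplies a map $\Gamma\in C^m([a,b])$ with $\Gamma^{(k)}=\gamma_k$ on $K$ for $0\le k\le m$; in particular $\Gamma=\gamma_0=\gamma$ on $K$, so $\{\Gamma\neq\gamma\}\subseteq[a,b]\setminus K$ has measure $<\varepsilon$. The main obstacle is the third step: converting the pointwise, density-based approximate differentiability into uniform Whitney estimates. This hinges on the Egorov-type uniformization of the radii $r_\eta$ and, above all, on selecting $m+1$ common good nodes at the correct scale so that the Vandermonde inversion transfers smallness at those nodes to smallness of every Whitney remainder $R_k$.
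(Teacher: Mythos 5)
Your proposal is correct, and its architecture coincides with the paper's: the paper does not reprove this theorem (it cites Liu--Tai), but its Proposition \ref{approxWhitney} carries out exactly your third step --- an Egorov-type uniformization of the approximate differentiability (via the sets $B_n=\{x:\mathcal L^1(W(x,r))\le r/4\ \forall r\le 1/n\}$, which play the role of your radii $r_{\eta_j}$ and thresholds $\delta_j$), followed by an intersection of compact sets $K(\varepsilon/2^n,1/n)$ to upgrade a fixed threshold $\delta$ to the required $o(|x-y|^{m-k})$ --- and then one concludes with the classical Whitney extension theorem, as you do. The one genuine difference is the tool used to convert smallness of the difference polynomial $Q=P^m_{\gamma,x}-P^m_{\gamma,y}$ into smallness of its coefficients: the paper applies De Giorgi's lemma to $Q$ on the set $S(x,y)=[x,y]\setminus(W(x,r)\cup W(y,r))$, which has measure $\ge r/2$, obtaining $|D^kQ(y)|\le Cr^{-(1+k)}\int_{S(x,y)}|Q|\le 2C\delta r^{m-k}$ in one line; you instead select $m+1$ common good nodes in separated subintervals of length $\sim r$ and invert the Vandermonde system. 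The two are quantitatively equivalent (De Giorgi's lemma is essentially the integrated, measure-robust form of your interpolation bound), so neither buys more generality; the citation saves you the bookkeeping of choosing the separation constant $C$ and the density threshold $\tau$ in terms of $m$, while your version is self-contained and avoids quoting the lemma. Minor points you should still make explicit if writing this up: the measurability of $x\mapsto r_\eta(x)$ (the paper handles the analogous measurability of $x\mapsto\mathcal L^1(W(x,r))$ via the product set $T(r)$ and monotonicity in $r$), and the fact that your nodes must be chosen with pairwise gaps of order $r$, not merely in disjoint intervals, so that the Vandermonde determinant is bounded below by a constant depending only on $m$.
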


In the Heisenberg group we give the following definition of Lusin property for horizontal curves.

\begin{definition}\label{Lusinm}
An absolutely continuous horizontal curve $\Gamma\colon [a,b]\to \bbH^n$ is said to have the \emph{Lusin property of order $m$} if for every $\epsilon>0$ there exists a $C^m$ horizontal curve $\widetilde{\Gamma}\colon [a,b]\to \bbH^{n}$ such that
\[\mathcal{L}^{1}(\{x\in [a,b]: \widetilde{\Gamma}(x)\neq \Gamma(x)\})<\varepsilon.\]
\end{definition}

We will also refer to the Lusin property of order $m$ as the $m$-Lusin property. 

%
%

Throughout this paper we use the usual notation for integral averages
\[\dashint_{A} f=\frac{1}{\mathcal{L}^{d}(A)}\int_{A}f\]
for any $A\subset \bbR^{d}$ and $f\colon A\to \bbR$ for which the expression is well defined.

\begin{definition}\label{Lpdiff}
Let $u\colon \bbR^{d} \to \bbR$, $x\in\bbR^d$, $p\in[1,\infty)$, and $m\in \mathbb{N}$. 

We say that $u$ is \emph{$m$ times $L^p$ differentiable at $x$} if there exists a polynomial $P^m_{u,x}$ on $\bbR^d$ of degree at most $m$ such that
\begin{equation}\label{eq.lpderivative}
    \left[ \dashint_{B(x,\rho)} |u(y) - P^m_{u,x}(y)|^p \dd y \right]^{1/p} = o(\rho^m).
\end{equation}
\end{definition}


%


\begin{remark}
As noted for instance in \cite{ABC14}, if $u$ is $m$ times $L^p$ differentiable at $x$ then $u$ is also $m$ times approximately differentiable at $x$ with the same choice of $P^m_{u,x}$.
\end{remark}

\subsection{Jets and Whitney Extension}

\begin{definition}\label{jet}
A \emph{jet} of order $m\in \bbN$ on a set $K\subset \mathbb{R}$ consists of a collection of $(m+1)-$continuous functions $F=(F^{k})_{k=0}^{m}$ on $K$.

Given such a jet $F$ and $a\in K$, the \emph{Taylor polynomial of order $m$ of $F$ at $a$} is
\[T_{a}^{m}F(x)=\sum_{k=0}^{m} \frac{F^{k}(a)}{k!}(x-a)^{k}
\quad
\text{for all } x \in \mathbb{R}.\]
If $m$ or $a$ are clear from the context, we may write $TF$ for the Taylor polynomial. We will also use the notation $F(x)$ for $F^{0}(x)$.
\end{definition}

Given a jet $F$ of order $m$ on $K\subset \mathbb{R}$, for $a\in K$ and $0\leq k\leq m$ we define
\[(R_{a}^{m}F)^{k}(x)=F^{k}(x)-\sum_{\ell=0}^{m-k}\frac{F^{k+\ell}(a)}{\ell!}(x-a)^{\ell}
\quad
\text{for all } x \in \mathbb{R}.\]

\begin{definition}\label{whitneyfield}
A jet $F$ of order $m$ on $K$ is a \emph{Whitney field of class $C^m$ on $K$} if, for every $0\leq k\leq m$, we have
\[(R_{a}^{m}F)^{k}(b)=o(|a-b|^{m-k})\]
as $|a-b|\to 0$ with $a,b \in K$.
\end{definition}

We now recall the classical Whitney extension theorem in the special case that the domain is a subset of $\mathbb{R}$ \cite{Whi34}.

\begin{theorem}[Classical Whitney extension theorem]\label{classicalWhitney}
Let $K$ be a closed subset of an open set $U\subset \mathbb{R}$. Then there is a continuous linear mapping $W$ from the space of Whitney fields of class $C^m$ on $K$ to $C^m(U)$ such that
\[D^{k}(WF)(x)=F^{k}(x) \quad \mbox{for $0\leq k\leq m$ and $x\in K$},\]
and $WF$ is $C^{\infty}$ on $U\setminus K$.
\end{theorem}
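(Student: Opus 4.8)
The plan is to follow the classical construction via a Whitney decomposition of $U\setminus K$ together with an adapted partition of unity. First I would decompose the open set $U\setminus K$ into a countable family of intervals $\{Q_j\}$ with pairwise disjoint interiors whose lengths are comparable to their distance from $K$, say $\ell(Q_j)\le \operatorname{dist}(Q_j,K)\le 4\ell(Q_j)$; such a decomposition is produced by the standard dyadic Whitney covering argument. To each $Q_j$ I attach a point $a_j\in K$ realizing, up to a fixed factor, the distance from $Q_j$ to $K$. I then build a $C^\infty$ partition of unity $\{\varphi_j\}$ subordinate to a mild dilation of these intervals, with $\sum_j\varphi_j\equiv 1$ on $U\setminus K$ and derivative bounds $|D^i\varphi_j(x)|\le C_i\,\ell(Q_j)^{-i}$ for every $i$; on any compact subset of $U\setminus K$ only finitely many $\varphi_j$ are nonzero, so the sum is locally finite.

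With these ingredients I define
\[
WF(x)=\begin{cases}\sum_j \varphi_j(x)\,T_{a_j}^m F(x), & x\in U\setminus K,\\ F^0(x), & x\in K.\end{cases}
\]
Because each summand is a product of a $C^\infty$ cutoff and a polynomial, and the sum is locally finite, $WF$ is automatically $C^\infty$ on $U\setminus K$. Linearity of $F\mapsto WF$ is immediate, since the decomposition, the points $a_j$, and the functions $\varphi_j$ are chosen independently of $F$, while $T_{a_j}^m F$ depends linearly on the jet.

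The heart of the proof is to show $WF\in C^m(U)$ with $D^k(WF)(x)=F^k(x)$ for $x\in K$ and $0\le k\le m$. I would fix $a\in K$ and a point $x\in U\setminus K$ close to $a$, locate the interval $Q_j\ni x$, and estimate $D^k(WF)(x)-F^k(a)$. The key algebraic device is that $\sum_j\varphi_j\equiv 1$ forces $\sum_j D^i\varphi_j\equiv 0$ for $i\ge 1$; this lets me subtract the fixed polynomial $T_a^m F$ inside the sum and, via the Leibniz rule, express $D^k(WF)(x)$ through terms $D^i\varphi_j(x)\,D^{k-i}\!\left(T_{a_j}^m F-T_a^m F\right)(x)$. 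The derivatives of the polynomial difference are governed precisely by the remainder quantities $(R_a^m F)^\ell(a_j)$, which the Whitney field hypothesis renders $o(|a-a_j|^{m-\ell})$. Since for the relevant indices $j$ the quantities $\ell(Q_j)$, $|a_j-a|$ and $|x-a|$ are all comparable, combining these remainder bounds with the derivative bounds on $\varphi_j$ yields $D^k(WF)(x)-F^k(a)=o(|x-a|^{m-k})$ as $x\to a$.

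Finally I would promote these pointwise estimates to genuine differentiability. An induction on $k$ shows that $D^k(WF)$ extends continuously to $K$ with boundary value $F^k$: the $o(\cdot)$ estimate above handles limits taken through $U\setminus K$, while limits taken through $K$ use continuity of the $F^\ell$ and the Whitney condition. Verifying the difference-quotient limit at each $a\in K$ then gives $D^k(WF)=F^k$ on $K$, and continuity of $W$ in the natural seminorm topology on Whitney fields follows by tracking the constants through the above inequalities. I expect the main obstacle to be exactly this regularity check across $K$: organizing the Leibniz expansion so that every term is matched to a remainder of the correct order, and confirming the differentiability limit uniformly near accumulation points of $K$, where the compatibility conditions encoded in the Whitney field are indispensable.
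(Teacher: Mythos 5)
The paper does not prove this theorem; it is quoted as a classical result with citations to Whitney and Bierstone, and your proposal is precisely the standard construction from those sources: Whitney decomposition of the complement, a subordinate partition of unity with derivative bounds $\lvert D^i\varphi_j\rvert\le C_i\ell(Q_j)^{-i}$, the extension $\sum_j\varphi_j T_{a_j}^m F$, and the Leibniz/telescoping argument using $\sum_j D^i\varphi_j\equiv 0$ together with the Whitney remainder estimates. Your outline is correct and identifies the genuinely delicate step (promoting the $o(\lvert x-a\rvert^{m-k})$ estimates to $C^m$ regularity across $K$), so there is nothing to add beyond the details you already flag.
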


We now recall the Whitney extension theorem for $C^m$ horizontal curves in $\bbH$ from \cite{PSZ19}. Suppose $F, G, H$ are jets of order $m$ on $K\subset \bbR$. For $a,b\in K$, we define the \emph{area discrepancy}
\begin{align}\label{Aab}
 A(a,b)&:=H(b)-H(a)-2\int_{a}^{b}((T_{a}^mF)'(T_{a}^mG)-(T_{a}^mG)'(T_{a}^mF))\\
\nonumber &\qquad +2F(a)(G(b)-T_{a}^mG(b))-2G(a)(F(b)-T_{a}^mF(b))
\end{align}
and the \emph{velocity}
\begin{equation}\label{Vab}
V(a,b):= (b-a)^{2m} + (b-a)^{m} \int_{a}^{b} \left(|(T_{a}^mF)'|+|(T_{a}^mG)'| \right).
\end{equation}

We say that jets $(F,G,H)$ of order $m$ on $K$ extend to a $C^m$ horizontal curve $(f,g,h)\colon \mathbb{R}\to \mathbb{H}$ if $(f,g,h)\colon \mathbb{R}\to \mathbb{H}$ is a $C^m$ horizontal curve such that $f^{i}|_{K}=F^{i}$, $g^{i}|_{K}=G^{i}$ and $h^{i}|_{K}=H^{i}$ for $0\leq i\leq m$.

\begin{theorem}\label{CmWhitney}
Let $K\subset \bbR$ be compact and $F, G, H$ be jets of order $m$ on $K$. 
Then $(F,G,H)$ extends to a $C^m$ horizontal curve $(f,g,h)\colon \bbR \to \bbH$ if and only if
\begin{enumerate}
\item $F, G, H$ are Whitney fields of class $C^m$ on $K$,
\item For $1\leq k\leq m$ the following equation holds at all points of $K$
\begin{equation}\label{ODEeq}
H^{k}=2\sum_{i=0}^{k-1}  {{k-1}\choose{i}} (F^{k-i}G^{i}-G^{k-i}F^{i}),
\end{equation}
\item $A(a,b)/V(a,b)\to 0$ uniformly as $(b-a) \to 0$ with $a,b\in K$.
\end{enumerate}
\end{theorem}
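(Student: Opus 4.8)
The plan is to treat the two implications separately, with the reverse (extension) direction being substantially harder. For necessity, suppose $(f,g,h)$ is a $C^m$ horizontal curve with $f^i|_K=F^i$, $g^i|_K=G^i$, $h^i|_K=H^i$. Condition (1) is immediate from Taylor's theorem with remainder, since the jet of derivatives of any $C^m$ function restricted to a set is a Whitney field of class $C^m$. Condition (2) is just the pointwise horizontality identity \eqref{HigherHoriz}, which holds everywhere for a $C^m$ horizontal curve and hence on $K$. For condition (3) I would substitute $h(b)-h(a)=2\int_a^b(f'g-g'f)$ into the definition \eqref{Aab} and replace $F,G$ and their Taylor polynomials by $f,g$ and $T_a^m f, T_a^m g$. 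Grouping terms, $A(a,b)$ becomes an integral of differences of the form $f'g-(T_a^m f)'(T_a^m g)$ plus boundary corrections. Taylor's theorem gives $f^{(j)}(t)-(T_a^m f)^{(j)}(t)=o(|t-a|^{m-j})$ uniformly for $a\in K$, and estimating each resulting term shows it is $o(V(a,b))$, with the two summands of \eqref{Vab} absorbing respectively the purely quadratic errors and the products against the genuine velocity $\int_a^b(|(T_a^m F)'|+|(T_a^m G)'|)$.

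For sufficiency the plan is to extend $F$ and $G$ first and recover $h$ by lifting. Since $\bbR\setminus K$ is a countable disjoint union of open intervals and $F,G$ are Whitney fields of class $C^m$, the classical Whitney extension theorem (Theorem \ref{classicalWhitney}) produces $C^m$ functions $f_0,g_0\colon\bbR\to\bbR$ with $D^kf_0|_K=F^k$ and $D^kg_0|_K=G^k$. I would then seek $f,g$ close to $f_0,g_0$ and define $h$ as the horizontal lift $h(t)=c+2\int(f'g-g'f)$, so that $(f,g,h)$ is automatically horizontal and $C^m$. The key observation is that the derivatives of $h$ of order $1\le k\le m$ at any $a\in K$ are, by the Leibniz rule, exactly the right-hand side of \eqref{ODEeq} evaluated from the jets of $f,g$, which by hypothesis (2) equals $H^k(a)$; thus all positive-order data of $h$ match automatically, and the constant $c$ fixes $H^0$ at one point of $K$. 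The entire problem therefore reduces to realizing the prescribed area on each bounded complementary interval, i.e.\ to arranging $2\int_p^q(f'g-g'f)=H(q)-H(p)$ whenever $(p,q)$ is a bounded component of $\bbR\setminus K$ with endpoints $p,q\in K$.

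The heart of the argument is thus to correct $f_0,g_0$ so as to produce exactly the right area on each gap while preserving all endpoint jet data and staying globally $C^m$. On a gap $(p,q)$ I would set $f=f_0+\phi$, $g=g_0+\psi$ with $\phi,\psi$ supported in $(p,q)$ and vanishing to order $m$ at both endpoints, so the jet match on $K$ is untouched. A computation, using the Whitney field estimates of (1), identifies the area still to be created by $(\phi,\psi)$ with the discrepancy $A(p,q)$ modulo errors that are $o(V(p,q))$; the precise boundary terms in \eqref{Aab} are chosen exactly so that $A$ is this extension-independent, left-invariant obstruction. To realize a prescribed signed area $\delta$ on $(p,q)$ I would use a loop built from fixed profiles rescaled to $(p,q)$ and vanishing to order $m$ at the endpoints. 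Such a loop creates area of order $(\text{amplitude})^2$ on its own, and of order $(\text{amplitude})\cdot\int_p^q(|(T_p^m F)'|+|(T_p^m G)'|)$ through interaction with the existing velocity of $(f_0,g_0)$, while its $C^m$ norm at scale $(q-p)$ is of order $(\text{amplitude})/(q-p)^m$. Balancing the two area mechanisms against the two summands $(q-p)^{2m}$ and $(q-p)^m\int_p^q(|(T_p^m F)'|+|(T_p^m G)'|)$ of $V(p,q)$ shows that area $\delta=A(p,q)$ can be produced with $C^m$ cost of order $(|A(p,q)|/V(p,q))^{1/2}+|A(p,q)|/V(p,q)$.

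The main obstacle is exactly this final gluing estimate. I must show the corrected $f,g$ are globally $C^m$, which requires that each correction together with all its derivatives up to order $m$ tends to zero, uniformly and at the correct rate relative to $(q-p)$, as the gaps shrink. This is where hypothesis (3) enters decisively: the uniform convergence $A(p,q)/V(p,q)\to 0$ is precisely the smallness needed to force the loop amplitudes, and hence the $C^m$ oscillation introduced near $K$, to be negligible, so that the extension is $C^m$ across the boundary of $K$ and not merely on each gap separately. Designing the loop profiles so that area production and $C^m$ cost are matched to the two pieces of $V$, and verifying the uniform-in-gap $C^m$ estimate near accumulation points of $K$, is the delicate step; once it is in place, checking that $(f,g,h)$ is horizontal with the prescribed jet is routine.
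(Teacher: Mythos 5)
This theorem is not proved in the paper at all: it is imported verbatim from the reference \cite{PSZ19} (where it is the main result), so there is no internal proof to compare against. That said, your outline is consistent with the strategy actually used there: necessity of (1) and (2) via Taylor's theorem and the differentiated horizontality identity \eqref{HigherHoriz}, necessity of (3) by substituting $h(b)-h(a)=2\int_a^b(f'g-g'f)$ into \eqref{Aab} and absorbing the Taylor errors into the two summands of $V$; sufficiency by extending $F,G$ with the classical Whitney theorem, defining $h$ as the horizontal lift, and correcting the area on each complementary gap. Your accounting of the two area-production mechanisms (self-area quadratic in amplitude versus interaction with the existing velocity, matched respectively to $(b-a)^{2m}$ and $(b-a)^m\int(|(T_a^mF)'|+|(T_a^mG)'|)$, giving a $C^m$ cost of order $(|A|/V)^{1/2}+|A|/V$) is exactly the right heuristic and is what makes the definition of $V$ the correct one.

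As a proof, however, this remains a plan: the two steps you yourself flag as delicate are precisely where all the work lies, namely (i) the case analysis on which term of $V(p,q)$ dominates and the explicit construction of loop profiles vanishing to order $m$ at the endpoints with the claimed cost, and (ii) the uniform $C^m$ gluing as the gaps accumulate on $K$, including the unbounded complementary components. One further point is glossed over: fixing the constant $c$ only matches $H^0$ at a single point, and to get $h=H^0$ on all of $K$ you must verify $h(b)-h(a)=H(b)-H(a)$ for all $a,b\in K$. This requires splitting $\int_a^b(f'g-g'f)$ into the gap contributions (handled by your area correction) plus the contribution of $[a,b]\cap K$ itself, which is identified with the variation of $H$ over $[a,b]\cap K$ using the $k=1$ case of \eqref{ODEeq} together with a classical Whitney extension of the jet $H$; this telescoping step should be made explicit. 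For a self-contained argument you would need to carry out these constructions; within the present paper the correct move is simply to cite \cite{PSZ19}.
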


Finally we state for future use the following fact about polynomials from \cite{PSZ19}.

\begin{lemma}\label{intmax}
Let $P$ be a polynomial of degree $n$, $a<b$, and $\|P\|_{\infty}:=\max_{[a,b]}|P|$. Then
\[ \frac{1}{8n^{2}} \|P\|_{\infty}\leq \dashint_{a}^{b} |P|\leq \|P\|_{\infty}.\]
\end{lemma}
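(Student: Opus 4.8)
The plan is to treat the two inequalities separately. The upper bound $\dashint_a^b |P| \leq \|P\|_\infty$ is immediate, since the integral average of a function never exceeds its maximum; so all the work lies in the lower bound, which asserts that a polynomial of controlled degree cannot have an $L^1$-average much smaller than its supremum. The guiding idea is that a polynomial of degree $n$ cannot oscillate too wildly, so once it attains a large value somewhere it must remain comparably large on a definite portion of the interval, and that portion contributes a definite amount to the integral.

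To make this quantitative I would invoke Markov's inequality: for a polynomial $P$ of degree $n$ on $[a,b]$ one has $\max_{[a,b]}|P'| \leq \frac{2n^2}{b-a}\max_{[a,b]}|P|$. This follows from the standard form $\|P'\|_{\infty} \leq n^2 \|P\|_{\infty}$ on $[-1,1]$ after an affine change of variables; note that such a change preserves the degree, the norm $\|P\|_{\infty}$, and the average $\dashint |P|$, so one may as well normalize the interval at the outset. Markov's inequality furnishes a global Lipschitz bound on $P$, and this is exactly the input needed to prevent $|P|$ from falling from its maximum value to zero over a short distance.

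With this in hand the remaining argument is elementary (and here I assume $n\geq 1$, so the constant makes sense). Choose $x_0 \in [a,b]$ with $|P(x_0)| = \|P\|_{\infty}$ and set $r = \frac{b-a}{4n^2}$. For $|x-x_0|\leq r$ the Markov bound gives
\[ |P(x)| \geq \|P\|_{\infty} - \frac{2n^2}{b-a}\|P\|_{\infty}\,|x-x_0| \geq \tfrac12\|P\|_{\infty}. \]
Since $r < b-a$, a short case check (if both $x_0+r>b$ and $x_0-r<a$ held, one would get $b-a<2r=\frac{b-a}{2n^2}$, a contradiction) shows that at least one of $[x_0,x_0+r]$ and $[x_0-r,x_0]$ lies inside $[a,b]$. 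Hence $|P|\geq \tfrac12\|P\|_{\infty}$ on a subinterval of $[a,b]$ of length at least $r$, and integrating over it yields $\int_a^b |P| \geq r\cdot\tfrac12\|P\|_{\infty} = \frac{b-a}{8n^2}\|P\|_{\infty}$. Dividing by $b-a$ gives the claimed lower bound.

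The only nontrivial ingredient is Markov's inequality itself; everything else is a direct computation, so I expect the main obstacle to be simply citing Markov in the precise form needed (or, if a self-contained treatment were wanted, reproving it). Given the statement I would quote it as classical. The resulting constant $8n^2$ is certainly not sharp, but the argument is robust and the exact constant is immaterial for the uses of the lemma elsewhere in the paper.
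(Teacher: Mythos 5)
Your proof is correct, and the paper itself gives no proof of this lemma --- it is quoted from \cite{PSZ19}, where the argument is essentially the one you give: Markov's inequality yields the Lipschitz bound $\|P'\|_{\infty}\leq 2n^{2}\|P\|_{\infty}/(b-a)$, so $|P|\geq\tfrac12\|P\|_{\infty}$ on a subinterval of length $(b-a)/(4n^{2})$, which is exactly where the constant $8n^{2}$ comes from. Your handling of the endpoint case and the (necessary) restriction to $n\geq 1$ are both fine.
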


\section{Facts about Approximate Derivatives and $L^{1}$ Derivatives}\label{Facts}

In this section we prove several lemmas which will be useful later in the paper.

\begin{lemma}\label{intL1}
Let $f\colon [a,b]\to \mathbb{R}$ be absolutely continuous and $m\geq 2$.

Suppose $f'$ is $m-1$ times $L^{1}$ differentiable at a point $x\in (a,b)$ with $L^1$ derivative given by the polynomial $P_{f,x}^{m-1}$ of degree at most $m-1$. Then $f$ is $m$ times $L^{1}$ differentiable at $x$ with $L^1$ derivative $Q_{f,x}^{m}$ of degree at most $m$ defined by $Q_{f,x}^m(y):=f(x)+\int_{x}^{y}P_{f,x}^{m-1}(t) \dd t$.
\end{lemma}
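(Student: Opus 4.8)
The plan is to show directly that the polynomial $Q_{f,x}^m$, defined as the antiderivative of $P_{f,x}^{m-1}$ anchored at the value $f(x)$, serves as the $m$-th order $L^1$ Taylor polynomial of $f$ at $x$. That is, I would estimate the average
\[
\dashint_{B(x,\rho)} |f(y)-Q_{f,x}^m(y)| \dd y
\]
and prove it is $o(\rho^m)$. The natural first step is to rewrite the integrand using the Fundamental Theorem of Calculus: since $f$ is absolutely continuous and $Q_{f,x}^m$ is a polynomial with $(Q_{f,x}^m)'=P_{f,x}^{m-1}$ and $Q_{f,x}^m(x)=f(x)$, we have for each $y$
\[
f(y)-Q_{f,x}^m(y)=\int_x^y \bigl(f'(t)-P_{f,x}^{m-1}(t)\bigr)\dd t.
\]
Thus the difference $f-Q_{f,x}^m$ is the integral of the ``error'' of the $L^1$ approximation of $f'$ by $P_{f,x}^{m-1}$, which is precisely the quantity controlled by the hypothesis.

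The key step is then to pass from the $L^1$ control on $f'-P_{f,x}^{m-1}$ to $L^1$ control on its integral. First I would bound $|f(y)-Q_{f,x}^m(y)|$ by $\int_{B(x,\rho)} |f'(t)-P_{f,x}^{m-1}(t)|\dd t$ for every $y\in B(x,\rho)$, since the interval from $x$ to $y$ is contained in $B(x,\rho)$. Averaging over $y\in B(x,\rho)$ gives
\[
\dashint_{B(x,\rho)} |f(y)-Q_{f,x}^m(y)| \dd y \leq \int_{B(x,\rho)} |f'(t)-P_{f,x}^{m-1}(t)|\dd t = \mathcal{L}^1(B(x,\rho)) \dashint_{B(x,\rho)} |f'(t)-P_{f,x}^{m-1}(t)|\dd t.
\]
Since $\mathcal{L}^1(B(x,\rho))=2\rho$ and the hypothesis that $f'$ is $m-1$ times $L^1$ differentiable at $x$ gives $\dashint_{B(x,\rho)} |f'-P_{f,x}^{m-1}|=o(\rho^{m-1})$, the product is $2\rho \cdot o(\rho^{m-1})=o(\rho^m)$, which is exactly the required estimate. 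It also remains to note that $Q_{f,x}^m$ has degree at most $m$, which is immediate since integrating a polynomial of degree at most $m-1$ raises the degree by exactly one.

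I do not expect a serious obstacle here; the argument is essentially the observation that integration smooths by one order and the corresponding gain of a factor $\rho$ in the average. The only point requiring a little care is the elementary inequality exchanging the pointwise integral over $[x,y]$ for the integral over the whole ball, and keeping track of the $\mathcal{L}^1(B(x,\rho))$ normalization factor so that the passage from the averaged quantity for $f$ to the averaged quantity for $f'$ is correct. One should also check the harmless degenerate case where $y$ and $x$ coincide or the orientation of the interval $[x,y]$ versus $[y,x]$, but this does not affect the absolute value bound.
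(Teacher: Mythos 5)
Your argument is correct and is essentially identical to the paper's proof: both write $f(y)-Q_{f,x}^m(y)=\int_x^y(f'-P_{f,x}^{m-1})$ via absolute continuity, bound this uniformly for $y\in B(x,\rho)$ by the integral over the whole ball, and use the factor $\mathcal{L}^1(B(x,\rho))=2\rho$ to upgrade the $o(\rho^{m-1})$ average for $f'$ to $o(\rho^m)$ for $f$. No gaps.
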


\begin{proof}
Denote $P=P_{f,x}^{m-1}$ and define $Q=Q_{f,x}^m$ by $Q_{f,x}^m(y):=f(x) + \int_x^y P(t)\dd t$. Let $\varepsilon>0$. From the definition of $L^1$ differentiability we have for all sufficiently small $\rho>0$
\begin{equation*}
\dashint_{B(x,\rho)} |f'(t) - P(t)|\dd t \leq \varepsilon \rho^{m-1}/2.
\end{equation*}
Absolute continuity gives for all $y\in B(x,\rho)$,
\begin{equation*}
\begin{split}
|f(y) - Q(y)| &= \left| f(x) + \int_x^y f'(t)\dd t - \left( f(x) + \int_x^y P(t)\dd t \right) \right| \\
& = \left| \int_x^y (f'(t) - P(t))\dd t \right| \\
&\leq \int_{B(x,\rho)} |f'(t) - P(t)|\dd t \\
& \leq \varepsilon \rho^m.
\end{split}
\end{equation*}
Hence given $\varepsilon>0$, we have for all sufficiently small $0<\rho<1$
\begin{equation*}
\begin{split}
\dashint_{B(x,\rho)} |f(y)-Q(y)|\dd y \leq \dashint_{B(x,\rho)} \varepsilon \rho^m = \varepsilon \rho^m.
\end{split}
\end{equation*}
This proves the lemma.
\end{proof}

\begin{lemma}\label{vertL1}
Suppose $(f,g,h)\colon [a,b]\to \mathbb{H}$ is an absolutely continuous horizontal curve in $\bbH$ and $f', g'$ are $m-1$ times $L^{1}$ differentiable at a point $x\in [a,b]$ for some $m\geq 2$. Then $h$ is $m$ times $L^{1}$ differentiable at $x$. More precisely, denote
\[R(y):=h(x)+2\int_{x}^{y}(P'Q-Q'P),\]
where $P, Q$ are the $L^{1}$ derivatives of order $m$ of $f, g$ respectively which exist by Lemma \ref{intL1}. Let $\widetilde{R}$ be the polynomial of degree at most $m$ such that $R(y)-\widetilde{R}(y)$ is divisible by $(y-x)^{m+1}$. Then $\widetilde{R}$ is the $L^1$ derivative of $h$ of order $m$ at $x$.
\end{lemma}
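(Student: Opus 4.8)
The plan is to reduce the statement to Lemma~\ref{intL1} applied to a suitable absolutely continuous function. By the horizontality condition \eqref{liftH'}, the function $h$ is absolutely continuous with derivative $h'(y) = 2(f'(y)g(y) - g'(y)f(y))$ for a.e.\ $y$. The strategy is to show that $h'$ is $m-1$ times $L^1$ differentiable at $x$ and then invoke Lemma~\ref{intL1} to conclude that $h$ is $m$ times $L^1$ differentiable, with $L^1$ derivative obtained by integration. First I would note that since $f', g'$ are $m-1$ times $L^1$ differentiable at $x$, Lemma~\ref{intL1} already gives that $f, g$ are $m$ times $L^1$ differentiable at $x$ with $L^1$ derivatives $P, Q$ respectively. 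Since $L^1$ differentiability implies approximate differentiability with the same polynomial, the polynomials $P, Q$ also serve as the approximate Taylor polynomials of $f, g$.

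\textbf{Identifying the $L^1$ derivative of $h'$.} The key computation is to establish that the polynomial $S := 2(P'Q - Q'P)$ is the $L^1$ derivative of $h'$ of order $m-1$ at $x$; note $\deg S \le m-1$. I would estimate
\[
\dashint_{B(x,\rho)} |h'(y) - S(y)| \dd y = 2\dashint_{B(x,\rho)} |f'g - g'f - P'Q + Q'P| \dd y,
\]
and split the integrand via the algebraic identity
\[
f'g - P'Q = (f' - P')g + P'(g - Q), \qquad g'f - Q'P = (g' - Q')f + Q'(f - P).
\]
Each of the four resulting terms is controlled by pairing an $L^1$-smallness factor (coming from $L^1$ differentiability of $f', g'$ at order $m-1$, which gives $\dashint_{B(x,\rho)} |f' - P'| = o(\rho^{m-1})$ and likewise for $g'$, and from $L^1$ differentiability of $f, g$ at order $m$) against a factor that is bounded near $x$. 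The bounded factors $g, Q, f, P$ are all continuous (indeed $f, g$ are continuous by absolute continuity and $P, Q$ are polynomials), hence uniformly bounded on a fixed neighborhood of $x$; the polynomial factors $P', Q'$ are likewise bounded. Combining these gives $\dashint_{B(x,\rho)} |h'(y) - S(y)| \dd y = o(\rho^{m-1})$, so $S$ is the order-$(m-1)$ $L^1$ derivative of $h'$.

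\textbf{Integrating and truncating.} Applying Lemma~\ref{intL1} to the absolutely continuous function $h$ with its order-$(m-1)$ $L^1$ derivative $S$ yields that $h$ is $m$ times $L^1$ differentiable at $x$ with $L^1$ derivative
\[
y \mapsto h(x) + \int_x^y S(t) \dd t = h(x) + 2\int_x^y (P'Q - Q'P) = R(y),
\]
which is exactly the polynomial $R$ in the statement. The final step addresses the degree: $R$ may have degree exceeding $m$ since $P'Q - Q'P$ can have degree up to $2m-1$. I would argue that replacing $R$ by its truncation $\widetilde{R}$ (the degree-$\le m$ polynomial agreeing with $R$ up to order $m$ at $x$) does not affect the $L^1$ derivative, because $R - \widetilde{R}$ is divisible by $(y-x)^{m+1}$ and hence $\dashint_{B(x,\rho)} |R - \widetilde{R}| = O(\rho^{m+1}) = o(\rho^m)$; by the triangle inequality $\widetilde{R}$ is therefore also an order-$m$ $L^1$ derivative of $h$, and by uniqueness it is \emph{the} $L^1$ derivative.

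\textbf{Main obstacle.} The main technical point is the splitting estimate in the second step: one must be careful that the error factors supplied by $L^1$ differentiability at order $m-1$ (namely $o(\rho^{m-1})$, not $o(\rho^m)$) suffice, since $S$ is only claimed as an order-$(m-1)$ derivative of $h'$. Pairing these $o(\rho^{m-1})$ terms against uniformly bounded continuous factors is exactly what yields the correct $o(\rho^{m-1})$ rate, and it is here that the continuity of $f, g$ (from absolute continuity) rather than mere measurability is essential.
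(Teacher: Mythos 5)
Your proposal is correct in substance and follows essentially the same route as the paper: the identical algebraic decomposition $f'g - P'Q = (f'-P')g + P'(g-Q)$ (and its analogue for $g'f - Q'P$), the same use of boundedness of $f, g, P', Q'$ near $x$ to pair against the $o(\rho^{m-1})$ and $o(\rho^{m})$ smallness factors, and the same final truncation of $R$ to $\widetilde{R}$ via divisibility by $(y-x)^{m+1}$. The only packaging difference is that the paper does not pass through Lemma~\ref{intL1}: it estimates $|h(y)-R(y)| = \bigl|2\int_x^y((f'g-P'Q)+(Q'P-g'f))\bigr| \le C\varepsilon\rho^m$ pointwise on $B(x,\rho)$ and then averages, which is the same computation with the integration step carried out by hand.

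One slip to fix: your assertion that $\deg S \le m-1$ is false. Since $P,Q$ have degree up to $m$, the polynomial $S=2(P'Q-Q'P)$ generically has degree $2m-2$ (the top-degree terms cancel, but nothing more), which exceeds $m-1$ for $m\ge 2$ --- as you yourself implicitly acknowledge later when discussing the degree of $R$. Consequently $S$ cannot literally be ``the $L^1$ derivative of $h'$ of order $m-1$'' in the sense of Definition~\ref{Lpdiff}, and Lemma~\ref{intL1} as stated does not apply to $h'$ with the polynomial $S$. The repair is routine: either truncate $S$ to its degree-$(m-1)$ Taylor part at $x$ before invoking Lemma~\ref{intL1} (the discarded piece is $O(\rho^m)=o(\rho^{m-1})$ in average, and the integrated polynomial still agrees with $\widetilde{R}$ modulo $(y-x)^{m+1}$), or simply note that the proof of Lemma~\ref{intL1} never uses the degree bound on the approximating polynomial.
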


\begin{proof}
Let $R$ be defined as in the statement of the lemma. Fix $0<\varepsilon<1$. Then there exists $\delta>0$ such that for all $0<\rho<\delta$ we have
\[\dashint_{B(x,\rho)}|f-P|\leq \varepsilon \rho^{m}, \qquad \dashint_{B(x,\rho)}|f'-P'|\leq \varepsilon \rho^{m-1},\]
and
\[\dashint_{B(x,\rho)}|g-Q|\leq \varepsilon \rho^{m}, \qquad \dashint_{B(x,\rho)}|g'-Q'|\leq \varepsilon \rho^{m-1}.\]
Let $0<\rho<\delta$ and $y\in B(x,\rho)$. We estimate as follows, using the fact $(f,g,h)$ is a horizontal curve and \eqref{liftH'},
\begin{align*}
h(y)-R(y)&= h(x)+ \int_{x}^{y}h' - h(x)-2\int_{x}^{y}(P'Q-Q'P)\\
&=2\int_{x}^{y}((f'g-P'Q)+(Q'P-g'f)).
\end{align*}
We estimate the first term as follows
\begin{align*}
\left| 2 \int_{x}^{y}(f'g-P'Q)\right| &\leq 2\int_{B(x,\rho)} |f'g-P'Q|\\
&= 4\rho \dashint_{B(x,\rho)} |f'g-P'Q|.
\end{align*}
Since $f'g-P'Q=(f'-P')g+P'(g-Q)$ and $g, P'$ are continuous hence bounded on $[a,b]$, we can continue our estimate as follows
\begin{align*}
4\rho \dashint_{B(x,\rho)} |f'g-P'Q|&\leq 4\rho \left( \|g\|_{\infty} \dashint_{B(x,\rho)}|f'-P'| + \|P'\|_{\infty}\dashint_{B(x,\rho)}|g-Q|\right)\\
&\leq 4\rho \left( \|g\|_{\infty}\varepsilon \rho^{m-1}+\|P'\|_{\infty}\varepsilon \rho^{m}\right)\\
&\leq C\varepsilon \rho^{m}
\end{align*}
for a constant $C$ independent of $y$ and $\rho$. The estimate of $2\int_{x}^{y}(Q'P-g'f)$ is similar. Hence we obtain $|h(y)-R(y)|\leq C\varepsilon \rho^{m}$ for all $0<\rho<\delta$. Consequently
\[\dashint_{B(x,\rho)}|h-R|\leq C\varepsilon \rho^{m}.\]
To conclude we notice that if $\widetilde{R}$ is the polynomial of degree at most $m$ defined in the statement of the lemma then for some constant $C$ independent of $\rho<1$ we have
\begin{align*}
\dashint_{B(x,\rho)}|h-\widetilde{R}| &\leq \dashint_{B(x,\rho)} |h-R|+\dashint_{B(x,\rho)}|\widetilde{R}-R|\\
&\leq \dashint_{B(x,\rho)}|h-R|+C\rho^{m+1}.
\end{align*}
Hence
\[\dashint_{B(x,\rho)}|h-\widetilde{R}|=o(\rho^{m})\]
so
$h$ is $m$ times $L^{1}$ differentiable at $x$ with derivative $\widetilde{R}$.
\end{proof}

We next prove Proposition \ref{approxWhitney} which shows that approximate differentiability almost everywhere leads to Whitney fields on large compact sets. Our argument is adapted from \cite{LT94} where a similar result is proved under slightly different assumptions. As in \cite{LT94} we need the following lemma by De Giorgi \cite{C64}.

\begin{lemma}[De Giorgi]
Let $E$ be a measurable subset of the ball $B(x,r)$ in $\mathbb{R}^n$ such that $\mathcal{L}^{n}(E)\geq Ar^n$ for some constant $A>0$. Then for each $m\in \bbN$ there is a positive constant $C$, depending only on n,m and $A$, such that for each polynomial $p$ of degree at most $m$ and for every multi-index $\alpha$
\[
|D^{\alpha}p(x)| \leq \frac{C}{r^{n+|\alpha|}}\int_E |p(y)|dy.
\]
\end{lemma}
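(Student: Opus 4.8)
The plan is to reduce to a normalized configuration and then exploit the finite dimensionality of the space of polynomials of degree at most $m$ together with a compactness argument. First I would normalize by translation and scaling: setting $q(z) := p(x+rz)$ defines a polynomial of degree at most $m$ satisfying $D^{\alpha}q(0) = r^{|\alpha|}D^{\alpha}p(x)$, while the set $F := \{z : x+rz \in E\}$ lies in $B(0,1)$ with $\mathcal{L}^{n}(F) = r^{-n}\mathcal{L}^{n}(E) \geq A$ and $\int_{F}|q| = r^{-n}\int_{E}|p|$. Under this change of variables the claimed inequality becomes $|D^{\alpha}q(0)| \leq C\int_{F}|q|$, so it suffices to produce a constant $C = C(n,m,A)$ with this property valid for every polynomial $q$ of degree at most $m$ and every measurable $F \subset B(0,1)$ with $\mathcal{L}^{n}(F) \geq A$.

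Next I would split this normalized estimate into two pieces. On the finite-dimensional vector space $V$ of polynomials of degree at most $m$, the quantity $q \mapsto \int_{B(0,1)}|q|$ is a norm and $q \mapsto |D^{\alpha}q(0)|$ is a seminorm; by equivalence of norms on a finite-dimensional space there is $C' = C'(n,m)$ with $|D^{\alpha}q(0)| \leq C'\int_{B(0,1)}|q|$ for all $q \in V$. It therefore remains to establish the \emph{measure-stability} estimate: there exists $c = c(n,m,A) > 0$ such that $\int_{F}|q| \geq c\int_{B(0,1)}|q|$ for every $q \in V$ and every admissible $F$. Combining the two gives the result with $C = C'/c$.

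The heart of the argument, and the step I expect to be the main obstacle, is the uniformity over all admissible $F$ in this last estimate, which I would handle by contradiction and compactness. Suppose no such $c$ exists; then there are polynomials $q_{k} \in V$ normalized by $\int_{B(0,1)}|q_{k}| = 1$ and sets $F_{k} \subset B(0,1)$ with $\mathcal{L}^{n}(F_{k}) \geq A$ such that $\int_{F_{k}}|q_{k}| \to 0$. Since the unit sphere of $V$ is compact and all norms on $V$ are equivalent, after passing to a subsequence $q_{k} \to q$ uniformly on $B(0,1)$ with $\int_{B(0,1)}|q| = 1$, so $q$ is a nonzero polynomial. The crucial geometric fact is that a nonzero polynomial vanishes only on a set of measure zero, so $t \mapsto \mathcal{L}^{n}(\{y \in B(0,1): |q(y)| \leq t\})$ tends to $0$ as $t \to 0^{+}$ by continuity of measure from above; choosing $t_{0} > 0$ with $\mathcal{L}^{n}(\{|q| \leq t_{0}\} \cap B(0,1)) < A/2$ yields $\mathcal{L}^{n}(F_{k} \cap \{|q| > t_{0}\}) \geq A/2$ and hence $\int_{F_{k}}|q| \geq t_{0}A/2$ for every $k$. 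Finally $\int_{F_{k}}|q_{k}| \geq \int_{F_{k}}|q| - \int_{B(0,1)}|q_{k} - q| \geq t_{0}A/2 - o(1)$, contradicting $\int_{F_{k}}|q_{k}| \to 0$. This establishes the measure-stability estimate and completes the proof.
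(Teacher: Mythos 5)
Your proof is correct. Note first that the paper itself offers no proof of this lemma: it is quoted as a known result of De Giorgi, with a citation to Campanato \cite{C64}, and used as a black box in the proof of Proposition \ref{approxWhitney}. So there is no argument in the paper to compare against; what you have written is a self-contained proof of the cited fact. Your three steps all check out: the rescaling $q(z)=p(x+rz)$ correctly reduces the claim to $|D^{\alpha}q(0)|\leq C\int_{F}|q|$ for $F\subset B(0,1)$ with $\mathcal{L}^{n}(F)\geq A$; the bound $|D^{\alpha}q(0)|\leq C'\int_{B(0,1)}|q|$ follows from equivalence of norms on the finite-dimensional space $V$ (here $\int_{B(0,1)}|q|$ is indeed a norm, since a polynomial vanishing on a set of positive measure is identically zero); and the compactness argument for the measure-stability estimate is sound, the key points being that a nonzero limit polynomial vanishes only on a null set and that continuity of measure from above gives the threshold $t_{0}$. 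Two harmless points you leave implicit: the constant must be uniform over all multi-indices $\alpha$, which holds because $D^{\alpha}q\equiv 0$ for $|\alpha|>m$ and there are only finitely many $\alpha$ with $|\alpha|\leq m$; and the convergence $q_{k}\to q$ uniformly on $B(0,1)$ is again just equivalence of the $L^{1}(B(0,1))$ and supremum norms on $V$. The trade-off of your compactness route is that the constant $C(n,m,A)$ is not explicit, whereas arguments in the style of Campanato's original (or of Lemma \ref{intmax} in this paper, which is the one-dimensional quantitative analogue) produce computable constants; for the qualitative use made of the lemma in Proposition \ref{approxWhitney}, an inexplicit constant is entirely sufficient.
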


\begin{proposition}\label{approxWhitney}
Let $u\colon [a,b]\to \mathbb{R}$ be measurable and $m$ times approximately differentiable almost everywhere. Let the approximate derivative at almost every point $x$ be denoted by
\[P_{u,x}^m(y) = \sum_{i=0}^m\frac{u_i(x)}{i!}(y-x)^i.\]
Then for every $\varepsilon>0$ there exists a compact set $K\subset [a,b]$ with $\mathcal{L}^1([a,b]\setminus K) \leq \varepsilon$ such that $\Gamma=(u_{i})_{i=0}^{m}$ is a $C^m$ Whitney field on $K$.
\end{proposition}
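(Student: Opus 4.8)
The goal is to extract a large compact set on which the collection of approximate derivative functions $(u_i)_{i=0}^m$ forms a $C^m$ Whitney field, so that the Whitney extension theorem becomes applicable. The natural strategy, following \cite{LT94}, is to combine Egorov's theorem with the De Giorgi lemma stated above. The plan is as follows.

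\textbf{Step 1: Set up a uniform approximate differentiability condition and apply Egorov.}
For almost every $x$, approximate differentiability of order $m$ means that for each $\varepsilon>0$ the set $\{ y : |u(y)-P^m_{u,x}(y)| \leq \varepsilon |y-x|^m \}$ has density one at $x$. I would first reformulate this quantitatively: for each $j\in\bbN$ define, for almost every $x$, a function $\eta_j(x)$ measuring how small a radius one must take so that the set where $u$ agrees with its Taylor polynomial to within $|y-x|^m/j$ occupies at least, say, half the measure of $B(x,\rho)$ for all $\rho<\eta_j(x)$. Each $u_i(\cdot)$ is a measurable function, and the thresholds $\eta_j$ are measurable and positive a.e. I would then invoke Egorov's theorem (or Lusin's theorem combined with a standard measure-theoretic argument) to pass to a compact set $K\subset [a,b]$ with $\mathcal{L}^1([a,b]\setminus K)\leq\varepsilon$ on which (i) each $u_i$ is continuous, and (ii) the convergence in the approximate differentiability condition is \emph{uniform}, i.e. the radius $\eta_j$ can be taken to be a single positive constant $\rho_j$ independent of $x\in K$.

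\textbf{Step 2: Control polynomial coefficients on the density set using De Giorgi.}
The heart of the argument is to estimate $(R_a^m\Gamma)^k(b)$, which compares the Taylor polynomial of the jet at $a$ with the data at $b$. Fix $a,b\in K$ close together and set $r=|a-b|$. The key device is to work on the set $E$ where $u$ is simultaneously well-approximated by $P^m_{u,a}$ and by $P^m_{u,b}$; by the uniform density estimate from Step 1, each approximating set occupies at least half of a suitable ball around $a$ (and around $b$), so their intersection $E$ has measure $\geq A r^n$ for a fixed $A>0$ once $r$ is small. On $E$ the difference $P^m_{u,a}-P^m_{u,b}$ is pointwise bounded by the sum of the two approximation errors, each of order $\varepsilon r^m$ after integrating. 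I would then apply the De Giorgi lemma to the polynomial $P^m_{u,a}-P^m_{u,b}$ of degree $\leq m$: its derivatives at the point $a$ (equivalently, the differences of coefficients) are controlled by $r^{-(1+|\alpha|)}\int_E |P^m_{u,a}-P^m_{u,b}|$. Reading off the coefficients and translating back into the $u_i$'s yields exactly the estimate $(R_a^m\Gamma)^k(b)=o(r^{m-k})$ uniformly in $a,b\in K$, which is the definition of a Whitney field.

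\textbf{Main obstacle.}
The step I expect to be most delicate is the uniformization in Step 1 together with verifying that the density set $E$ is genuinely large at the scale $r=|a-b|$, not merely at scales tending to zero for each fixed center. One must be careful that the single radius $\rho_j$ produced by Egorov dominates the relevant comparison radius: for $a,b\in K$ with $|a-b|=r$ small, the ball around $a$ on which the density-$\tfrac12$ estimate holds must contain $b$ and have radius comparable to $r$, so that the two good sets (around $a$ and around $b$) overlap in a set of measure at least $Ar^n$. Making this geometric overlap quantitative while keeping all constants uniform over $K$, and correctly bookkeeping the factors of $\varepsilon$ and powers of $r$ through the De Giorgi estimate so that the final bound is $o(r^{m-k})$ rather than merely $O(r^{m-k})$, is the technical crux. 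The continuity of the $u_i$ on $K$, which is needed for $\Gamma$ to be a jet in the sense of Definition \ref{jet}, follows from the Lusin/Egorov reduction and is comparatively routine.
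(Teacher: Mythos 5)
Your proposal follows essentially the same route as the paper: uniformize the approximate differentiability over a large compact set (the paper does this by hand via the measurable sets $B_n$ where the bad set $W(x,r)$ has measure at most $r/4$, rather than by invoking Egorov), compare $P^m_{u,a}$ and $P^m_{u,b}$ on the large common good set $S(a,b)\subset[a,b]$, apply De Giorgi's lemma to their difference, and diagonalize over accuracies $\delta=1/n$ to upgrade $O(\delta r^{m-k})$ to $o(r^{m-k})$. The only adjustment needed is the one you already flag: a density fraction of $\tfrac12$ on the two-sided ball $B(a,r)$ is not quite enough to force the two good sets to overlap inside $[a,b]$, so the bad sets must be taken smaller (the paper uses measure at most $r/4$ each), which is harmless since approximate differentiability gives density one.
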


\begin{proof}
It is proven in \cite{LT94} that all the functions $u_{i}$ are measurable under the given hypotheses. Let $0<\delta < 1$ and $0<\varepsilon<1$ be fixed for the moment. For every $x\in[a,b]$ where $u$ is approximately differentiable and $r>0$, define
\[
W(x,r):=\{y \in[a,b]\cap [x-r,x+r]: |u(y)-P_{u,x}^m(y)|> \delta |x-y|^m \}.
\]
Each set $W(x,r)$ is measurable because all the $u_i$ are measurable. We can write $W(x,r) = \{ y\in[a,b]:(x,y)\in T(r) \}$, where
\[
T(r) := \{ (x,y)\in [a,b]\times [a,b] : |x-y|<r, |u(y) - P_{u,x}^m(y)|>\delta|x-y|^m \}.
\]
Since $T$ is measurable, it follows $x\mapsto \mathcal{L}^{1}(W(x,r))$ is a measurable function of $x$. For $n\in \bbN$ define the sets
\begin{equation}\label{eq.bdeltaset}
B_{n} := \{ x\in[a,b] : \mathcal{L}^{1}(W(x,r))\leq r/4 \quad \mbox{for all } r\leq 1/n \}.
\end{equation}
Since $x\mapsto \mathcal{L}^{1}(W(x,r))$ is a measurable function of $x$ and $\mathcal{L}^{1}(W(x,r))$ is monotonic in $r$ for each fixed $x$, it is easy to show that the sets $B_{n}$ are measurable. Clearly $B_{n}\subset B_{n+1}$ for every $n$. Since $u$ is $m$ times approximately differentiable almost everywhere, it follows $\mathcal{L}^{1}([a,b]\setminus \bigcup_{n=1}^{\infty} B_{n}) = 0$.
Consider two points $x,y\in B_{n}$ with $x\leq y$ and $|x-y|\leq 1/n$. Let $r = |y-x|$ and define the measurable sets
\[
S(x,y) := [x,y]\setminus (W(x,r)\cup W(y,r)).
\]
Then
\[
\mathcal{L}^{1}(S(x,y)) \geq |y-x|-\mathcal{L}^{1}(W(x,r))-\mathcal{L}^{1}(W(y,r))\geq r/2.
\]
Define the polynomial $q:= P_{u,y}^m - P_{u,x}^m$. For $z\in S(x,y)$ we estimate $|q(z)|$ as follows
\[
|q(z)| \leq |P_{u,y}^m(z) - u(z)| + |u(z) - P_{u,x}^m(z)| \leq \delta(|z-y|^m + |x-z|^m)\leq 2\delta r^m.
\]
We apply De Giorgi's Lemma to the polynomial $q$ with $E=S(x,y)$ and $A=1/2$ to obtain for every $k$
\[
|D^k q(y)| = |u_k(y) - D^k P_{u,x}^m(y)| \leq \frac{C}{r^{1+k}}\int_{S(x,y)} |q(z)|\dd z \leq 2C\delta r^{m-k}.
\]
Recall $\varepsilon>0$ was fixed earlier. Since $\mathcal{L}^{1}([a,b]\setminus \bigcup_{n=1}^{\infty} B_{n}) = 0$ and the sets $B_{n}$ are increasing, we may choose $N\in \bbN$ such that $\mathcal{L}^{1}([a,b]\setminus B_{N})\leq \varepsilon/2$. We then choose $K$ a compact subset of $B_{N}$ with $\mathcal{L}^{1}([a,b]\setminus K)\leq \varepsilon$. Now we recall the dependence of $K$ on $\varepsilon, \delta$ and denote $K=K(\varepsilon, \delta)$ and $N=N(\varepsilon, \delta)$. The set $K(\varepsilon, \delta)$ has the following two properties for a constant $C$ depending only on $m$:
\begin{enumerate}
\item $\mathcal{L}^{1}([a,b]\setminus K(\varepsilon, \delta))\leq \varepsilon$,
\item For every $0\leq k\leq m$ and $x,y\in K(\varepsilon, \delta)$ with $|x-y|\leq 1/N(\varepsilon, \delta)$ we have
\[|u_k(y) - D^k P_{u,x}^m(y)|\leq 4C \delta |x-y|^{m-k}.\]
\end{enumerate}
We now put our compact sets together. Fix $\varepsilon>0$ and define
\[K=\bigcap_{n=1}^{\infty} K(\varepsilon/2^{n},1/n).\]
Using (1) for the sets $K(\varepsilon/2^{n},1/n)$, we estimate the measure of $K$ as follows
\begin{align*}
\mathcal{L}^{1}([a,b]\setminus K)&\leq \sum_{n=1}^{\infty} \mathcal{L}^{1}([a,b]\setminus K(\varepsilon/2^{n},1/n))\\
&\leq \sum_{n=1}^{\infty}\varepsilon/2^{n}\\
&=\varepsilon.
\end{align*}
Using (2) for the sets $K(\varepsilon/2^{n},1/n)$, we see that $K$ has the following property. Whenever $0\leq k\leq m$ and $x,y\in K$ satisfy $|x-y|\leq N(\varepsilon/2^{n}, 1/n)$ for some $n\in \bbN$,
\[|u_k(y) - D^k P_{u,x}^m(y)|\leq 4C |x-y|^{m-k}/n.\]
In other words, for every $0\leq k\leq m$ we have
\[|u_k(y) - D^k P_{u,x}^m(y)| =o(|x-y|^{m-k})\]
as $|x-y|\to 0$ with $x,y\in K$. Hence $\Gamma=(u_{i})_{i=0}^{m}$ is a $C^m$ Whitney field on $K$.
\end{proof}

%

\section{$C^m$ Horizontal Lusin Approximation for Horizontal Curves with $L^1$ Differentiable Velocity}\label{Sectionthm1}

In this section we prove our first main theorem. Before giving the statement we first recall that if $f\colon [a,b]\to \mathbb{R}$ is $m$ times $L^{1}$ differentiable at a point $x\in [a,b]$, then we denote the $L^{1}$ derivative at $x$ by
\[P_{f,x}^m(y) = \sum_{i=0}^m \frac{f_{i}(x)}{i!}(y-x)^i,\]
where $f_{i}(x)\in \mathbb{R}$ for $0\leq i\leq m$. Also, if a function $f\colon [a,b]\to \mathbb{R}$ is absolutely continuous and $f'$ is $m-1$ times $L^{1}$ differentiable at a point $x\in [a,b]$, then $f$ is $m$ times $L^{1}$ differentiable at $x$ with derivative given by Lemma \ref{intL1}.

\begin{theorem}\label{thm1}
Let $I\subset\mathbb{R}$ be an interval and $\Gamma = (f,g,h):I\to\bbH$ be an absolutely continuous horizontal curve such that $f'$ and $g'$ are $m-1$ times $L^{1}$ differentiable at almost every point of $I$. Then $\Gamma$ has the $m$-Lusin property. Further, for every $\eta>0$ there is a $C^m$ horizontal curve $\widetilde{\Gamma}=(\widetilde{f},\widetilde{g},\widetilde{h})\colon I\to \bbH$ such that
\[\mathcal{L}^{1} \left(\bigcup_{k=0}^{m}\{x\in I: \widetilde{f}^{k}(x)\neq f_{k}(x) \, \mbox{ or } \, \widetilde{g}^{k}(x)\neq g_{k}(x) \, \mbox{ or } \,  \widetilde{h}^{k}(x)\neq h_{k}(x)\}\right)<\eta.\]
\end{theorem}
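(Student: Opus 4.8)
The plan is to reduce the statement to a single application of the $C^m$ horizontal Whitney extension theorem (Theorem \ref{CmWhitney}) on a suitable large compact set, in the spirit of the Euclidean argument of \cite{LT94}. After a routine reduction we may assume $I=[a,b]$ is compact. Let $E\subset I$ be the full-measure set of points where $f'$ and $g'$ are $m-1$ times $L^1$ differentiable. By Lemma \ref{intL1} the functions $f,g$ are $m$ times $L^1$ differentiable on $E$, and by Lemma \ref{vertL1} so is $h$; denote the associated jets by $F=(f_k)_{k=0}^m$, $G=(g_k)_{k=0}^m$, $H=(h_k)_{k=0}^m$. Since $L^1$ differentiability implies approximate differentiability with the same polynomial, these coincide with the approximate derivatives. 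The goal is to produce a compact $K\subset E$ with $\mathcal{L}^1(I\setminus K)<\eta$ on which $(F,G,H)$ satisfies the three hypotheses of Theorem \ref{CmWhitney}; the resulting extension $\widetilde\Gamma=(\widetilde f,\widetilde g,\widetilde h)$ then agrees with $\Gamma$ to order $m$ on $K$, so both conclusions follow because the exceptional set is contained in $I\setminus K$ (the $m$-Lusin property being the case $k=0$).

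First I would dispatch hypotheses (1) and (2). Applying Proposition \ref{approxWhitney} separately to $f,g,h$ and intersecting the resulting sets yields a compact $K_1\subset E$ with $\mathcal{L}^1(I\setminus K_1)<\eta/2$ on which $F,G,H$ are $C^m$ Whitney fields, which is (1). Hypothesis (2), the pointwise relation \eqref{ODEeq}, is automatic on $E$: by Lemma \ref{vertL1} the jet $H$ agrees to order $m$ at each $x$ with $R(y)=h(x)+2\int_x^y((T_x^mF)'(T_x^mG)-(T_x^mG)'(T_x^mF))$, and differentiating $R$ exactly $k$ times at $x$ by the Leibniz rule reproduces the right-hand side of \eqref{ODEeq}, using $(T_x^mF)^{(j)}(x)=f_j(x)$ and $(T_x^mG)^{(j)}(x)=g_j(x)$.

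The main obstacle is hypothesis (3), the area discrepancy estimate $A(a,b)/V(a,b)\to 0$. Writing $P=T_a^mF$, $Q=T_a^mG$ and the remainders $\phi=f-P$, $\psi=g-Q$ (so $\phi(a)=\psi(a)=0$), I would substitute the horizontality identity $H(b)-H(a)=2\int_a^b(f'g-g'f)$ into \eqref{Aab}, expand $f=P+\phi$ and $g=Q+\psi$, and integrate by parts so that the boundary terms of \eqref{Aab} combine into a compact form, namely
\[A(a,b)=4\int_a^b P'\psi-4\int_a^b Q'\phi+2\int_a^b(\phi'\psi-\psi'\phi)+2\phi(b)\int_a^b Q'-2\psi(b)\int_a^b P'.\]
Each term is then matched against $V(a,b)$, which dominates both $(b-a)^{2m}$ and $(b-a)^m\int_a^b(|P'|+|Q'|)$. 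The key point is that, because $(T_a^mF)'$ and $(T_a^mG)'$ are precisely the $L^1$ derivatives of $f'$ and $g'$ (via Lemma \ref{intL1}), the remainders satisfy $\|\phi\|_{\infty,[a,b]}\leq\int_a^b|\phi'|=\int_a^b|f'-(T_a^mF)'|$, and similarly for $\psi$, with both integrals $o((b-a)^m)$.

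To make this decay uniform in $a\in K$ — which is exactly what hypothesis (3) demands — I would run an Egorov-type argument on the $L^1$ differentiability of $f'$ and $g'$, producing a compact $K_2$ with $\mathcal{L}^1(I\setminus K_2)<\eta/2$ on which $\dashint_{B(a,\rho)}|f'-(T_a^mF)'|\leq\varepsilon\rho^{m-1}$ uniformly for small $\rho$, and likewise for $g'$; this forces $\|\phi\|_{\infty,[a,b]}$, $\|\psi\|_{\infty,[a,b]}$, $\int_a^b|\phi'|$ and $\int_a^b|\psi'|$ to be bounded by $\varepsilon(b-a)^m$ uniformly. Setting $K=K_1\cap K_2$ and inserting these bounds into the displayed identity gives $|A(a,b)|\leq C\varepsilon\,V(a,b)$ on $K$, which is hypothesis (3). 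Applying Theorem \ref{CmWhitney} on $K$ and restricting the extension to $I$ completes the argument. The delicate points throughout are the integration by parts genuinely collapsing $A(a,b)$ into a sum of terms each absorbed by a factor of $V(a,b)$, and the uniformization of the $L^1$ decay on $K$, for which the measurability of the jet coefficients (as in Proposition \ref{approxWhitney}) is needed to justify the Egorov step.
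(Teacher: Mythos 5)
Your proposal is correct and follows the same overall strategy as the paper: upgrade the hypothesis to $m$ times $L^1$ differentiability of $f,g,h$ via Lemmas \ref{intL1} and \ref{vertL1}, build a compact set $K$ combining Proposition \ref{approxWhitney} (for the Whitney-field condition) with an Egorov-type uniformization of the $L^1$ decay of $f'-(T_a^mF)'$ and $g'-(T_a^mG)'$, verify condition \eqref{ODEeq} by differentiating the polynomial identity from Lemma \ref{vertL1} at the base point, and conclude with Theorem \ref{CmWhitney}. The one place where you genuinely diverge is the verification of hypothesis (3). The paper first reduces to $F(a)=G(a)=H(a)=0$ by translation invariance of $A$ and $V$, then bounds $\int_a^b|f'g-(TF)'TG|$ via the three-term splitting $(f'-(TF)')(g-TG)+(f'-(TF)')TG+(g-TG)(TF)'$; this forces it to control $\|(TF)'\|_\infty$ by $\dashint_a^b|(TF)'|$ through Lemma \ref{intmax} and to use $TG(a)=0$ to bound $\|TG\|_\infty$. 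You instead keep the boundary terms of \eqref{Aab} and integrate by parts; I checked your identity
\[A(a,b)=4\int_a^b P'\psi-4\int_a^b Q'\phi+2\int_a^b(\phi'\psi-\psi'\phi)+2\phi(b)\int_a^b Q'-2\psi(b)\int_a^b P'\]
and it is correct (using $\phi(a)=\psi(a)=0$ and $P(a)=F(a)$, $Q(a)=G(a)$ on $K$). Each term then pairs a factor bounded by $\varepsilon(b-a)^m$ with either $\int_a^b|P'|$, $\int_a^b|Q'|$, or another $\varepsilon(b-a)^m$, and these are exactly the quantities appearing in $V(a,b)$. What this buys you is that the polynomial derivatives only ever appear under an integral, so Lemma \ref{intmax} and the translation reduction are not needed; what the paper's route buys is a shorter computation that never requires verifying an exact algebraic identity for $A(a,b)$. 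Both yield $|A(a,b)|\leq C\varepsilon V(a,b)$ uniformly for $a,b\in K$ close together, which is what Theorem \ref{CmWhitney}(3) demands.
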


\begin{proof}
Using Lemma \ref{intL1} it follows that $f$ and $g$ are $m$ times $L^{1}$ differentiable almost everywhere. By Lemma \ref{vertL1} we also know that $h$ is $m$ times $L^{1}$ differentiable almost everywhere. At almost every $x\in I$ denote the $L^1$ derivative of $f$ by
\[P_{f,x}^m(y) = \sum_{k=0}^m\frac{f_k(x)}{k!}(y-x)^k\]
where the $f_k$ are measurable functions by \cite{LT94}. Similarly define the $L^{1}$ derivatives $P_{g,x}^m$ and $P_{h,x}^m$ with coefficients $g_k(x)$ and $h_k(x)$ at almost every point $x$, which are measurable functions of $x$.

Fix $\eta>0$. Choose a compact set $K\subset I$ satisfying $\mathcal{L}^{1}(I\setminus K)<\eta$ with the following properties:
\begin{enumerate}
\item the jets $F, G, H$ defined on $K$ by
\[F^k = f_k|_{K},\ G^k = g_k|_{K} \ \text{ and }\ H^k = h_k|_{K}  \ \text{ for } 0\leq k \leq m\]
are Whitney fields of class $C^m$ on $K$.
\item For every $\varepsilon>0$ there is $\delta>0$ such that if $a,b\in K$ with $|b-a|<\delta$ then 
\begin{equation}\label{uniformL1}
\dashint_a^b |f'-(T_{a}^{m}F)'| \leq \epsilon (b-a)^{m-1} \quad \mbox{and} \quad  \dashint_a^b |g'-(T_{a}^{m}G)'| \leq \epsilon (b-a)^{m-1}.
\end{equation}
\end{enumerate}
The first property above is possible using Proposition \ref{approxWhitney}. To obtain the second property we use the almost everywhere $(m-1)$ times $L^{1}$ differentiability of $f'$ and $g'$, Lemma \ref{intL1}, elementary measure theory, and the fact that $P_{f,a}^m=T_{a}^{m}F$ and $P_{g,a}^m=T_{a}^{m}G$. We now show that the hypotheses of Theorem \ref{CmWhitney} hold for the jets $F, G, H$ on the compact set $K$.



\smallskip

\emph{Verification of Theorem \ref{CmWhitney}(1).} This follows directly from the definition of $K$.

\smallskip

\emph{Verification of Theorem \ref{CmWhitney}(2).} We need to check \eqref{ODEeq}, which we recall states
\[H^{k}=2\sum_{i=0}^{k-1}  {{k-1}\choose{i}} (F^{k-i}G^{i}-G^{k-i}F^{i}) \quad \mbox{on }K\mbox{ for }1\leq k\leq m.\]
Fix $a\in K$ and let $TF= T^m_a F$, $TG = T^m_a G$, $TH = T^m_a H$ for simplicity. Using Lemma \ref{vertL1}, we know
\[(P^{m}_{h,a})'=2((P^{m}_{f,a})'(P^{m}_{g,a})-(P^{m}_{g,a})'(P^{m}_{f,a}))+S_{a}'(y),\]
where $S_{a}(y)$ is a polynomial divisible by $(y-a)^{m+1}$. Hence
\[(TH)'=2((TF)'(TG)-(TG)'(TF))+S_{a}'(y).\]
Differentiating the Taylor polynomials as was done to derive \eqref{ODEeq} yields
\[(TH)^{k}=2\sum_{i=0}^{k-1}  {{k-1}\choose{i}} ((TF)^{k-i}(TG)^{i}-(TG)^{k-i}(TF)^{i}) + S_{a}^{k} \quad \mbox{on }K\mbox{ for }1\leq k\leq m,\]
where the polynomial $S_{a}^{k}(y)$ is divisible by $(y-a)^{m+1-k}$. In particular, $S_{a}^{k}(a)=0$ for $1\leq k\leq m$. Since the Taylor polynomials are based at $a$, $(TF)^{i}(a)=F^{i}(a)$ for $0\leq i\leq m$ and similarly for $G$ and $H$. Hence substituting in $a$ we obtain
\[H^{k}(a)=2\sum_{i=0}^{k-1}  {{k-1}\choose{i}} (F^{k-i}(a)G^{i}(a)-G^{k-i}(a)F^{i}(a))\]
for all $a\in K$ and $1\leq k\leq m$ as required.

\smallskip

\emph{Verification of Theorem \ref{CmWhitney}(3).} Given $0<\varepsilon<1$ fixed, let $\delta>0$ be chosen as above. Fix $a,b \in K$ with $0<b-a<\delta$. For ease of notation we write $TF = T^m_a F$ and $TG = T^m_a G$. For simplicity we will consider only the case
\[F(a) = G(a) = H(a) = 0.\]
Otherwise one can use the translation invariance of $A(a,b)$ and $V(a,b)$ as in \cite{PSZ19}. Since $F(a) = G(a) = H(a) = 0$, $A(a,b)$ is of the form
\[
A(a,b) = H(b) - H(a) - 2\int_a^b ((TF)'TG - TF(TG)').
\]
Since $(f,g,h)$ is a horizontal curve, we have
\[
H(b) - H(a) =h(b)-h(a)= 2\int_a^b (f'g - fg').
\]
We estimate $|A(a,b)|$ as follows
\begin{align*}
&\left|H(b) - H(a) - 2\int_a^b ((TF)'TG - TF(TG)')\right| \\
&\qquad \leq 2\left( \int_a^b |f'g - (TF)'TG| + \int_a^b |fg' - TF(TG)'| \right).
\end{align*}
We will show how to estimate the first term after the inequality, the second one will follow by changing the roles of $f$ and $g$. First we pass to the average
\[
\int_a^b |f'g - (TF)'TG| = (b-a)\dashint_a^b |f'g - (TF)'TG|
\]
and then we decompose the argument as
\[
f'g - (TF)'TG = (f'-(TF)')(g-TG)+(f'-(TF)')TG + (g-TG)(TF)'.
\]
We then obtain
\[
    \begin{split}
        (b-a)\dashint_a^b |f'g - (TF)'TG| \leq (b-a)&\left[ \left( \dashint_a^b |f'-(TF)'| \right)||g-TG ||_{\infty} \right.\\
        &+ \left( \dashint_a^b |f'-(TF)'| \right)||TG||_{\infty}\\
        &+ \left. \left( \dashint_a^b |g-TG| \right) ||(TF)'||_{\infty} \right].
    \end{split}
\]
From \eqref{uniformL1} we obtain
\[
\left( \dashint_a^b |f'-(TF)'| \right) \leq \epsilon (b-a)^{m-1}.
\]
Absolute continuity of $g$, the Fundamental Theorem of Calculus, and \eqref{uniformL1} gives
\[
||g-TG||_{\infty} \leq (b-a) \left( \dashint_a^b |g'-(TG)'| \right) \leq \epsilon (b-a)^m.
\]
Using Lemma \ref{intmax} we have
\[
||(TF)'||_{\infty} \leq C \dashint_a^b |(TF)'|
\]
for some constant $C\geq 1$ depending only on $m$. Using $TG(a)=G(a)=g(a)=0$ and again the Fundamental Theorem of Calculus, we have
\[
||TG||_{\infty} \leq \int_a^b |(TG)'|.
\]
Combining all together we get
\begin{align*}
\int_a^b |f'g - (TF)'TG| &\leq \epsilon^2 (b-a)^{2m} + \epsilon (b-a)^m \int_a^b |(TG)'| + C \epsilon (b-a)^m \int_a^b |(TF)'|.\\
&\leq C\varepsilon V(a,b)
\end{align*}
By doing the same computation with $f$ and $g$ switched we obtain
\[|A(a,b)|\leq 4C\varepsilon V(a,b)\]
whenever $a,b\in K$ with $0<b-a<\delta$. This yields Theorem \ref{CmWhitney}(3).


\smallskip

\emph{Conclusion.} We have shown that the jets $F, G, H$ satisfy the hypotheses of Theorem \ref{CmWhitney} on the compact set $K$. Hence $\Gamma=(F,G,H)$ extends to a $C^m$ horizontal curve $\widetilde{\Gamma}=(\widetilde{f}, \widetilde{g}, \widetilde{h})\colon I\to \mathbb{H}^{1}$ satisfying
\[\widetilde{f}^{k}|_{K}=F^{k},\quad \widetilde{g}^{k}|_{K}=G^{k},\quad \widetilde{h}^{k}|_{K}=H^{k} \quad \mbox{for }0\leq k\leq m.\]
From the definition of the compact set $K$ and the jets $F, G, H$ we have
\begin{align*}
&\mathcal{L}^{1} \left(\bigcup_{k=0}^{m}\{x\in I: \widetilde{f}^{k}(x)\neq f_{k}(x) \, \mbox{ or } \, \widetilde{g}^{k}(x)\neq g_{k}(x) \, \mbox{ or } \,  \widetilde{h}^{k}(x)\neq h_{k}(x)\}\right)\\
&\quad \leq \mathcal{L}^{1}(I\setminus K)\\
&\quad<\eta.
\end{align*}
This completes the proof of the theorem.
\end{proof}

\section{A Horizontal Curve with no Lusin Approximation}\label{Sectionthm2}

In this section we prove our second main theorem, which justifies the hypotheses of Theorem \ref{thm1} and highlights the difference between the settings of Euclidean space and the Heisenberg group.

\begin{theorem}\label{thm2}
There exists $\Gamma=(f,g,h)\colon [0,1]\to \bbH$ which is absolutely continuous and horizontal with the following properties:
\begin{enumerate}
\item Almost everywhere the maps $f, g, h$ are twice $L^p$ differentiable for all $p\geq 1$,
\item Almost everywhere the maps $f', g', h'$ are once approximately differentiable,
\item $\Gamma$ does not admit a $C^2$ horizontal Lusin approximation.
\end{enumerate}
\end{theorem}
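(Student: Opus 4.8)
The plan is to show that the sole obstruction to a $C^2$ horizontal Lusin approximation is the area--discrepancy condition, Theorem \ref{CmWhitney}(3), and then to build $\Gamma$ so that this condition fails on every large set. Suppose for contradiction that $\Gamma$ admits such approximations, so for each $\epsilon>0$ there is a $C^2$ horizontal curve $\widetilde\Gamma=(\widetilde f,\widetilde g,\widetilde h)$ with $K:=\{\widetilde\Gamma=\Gamma\}$ of measure at least $1-\epsilon$. At a density point $x$ of $K$ the approximate Taylor polynomials of $f,g,h$ coincide with the classical Taylor polynomials of the $C^2$ functions $\widetilde f,\widetilde g,\widetilde h$; hence on the full--measure subset $K'\subset K$ of density points the jets $F=(f,f',f_2)$, $G=(g,g',g_2)$, $H=(h,h',h_2)$ formed from the approximate derivatives of $\Gamma$ agree with the jets of the genuine $C^2$ horizontal curve $\widetilde\Gamma$. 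Since those jets extend (via $\widetilde\Gamma$), the ``only if'' direction of Theorem \ref{CmWhitney} forces $A(a,b)/V(a,b)\to0$ uniformly for $a,b\in K'$, where $A,V$ are the quantities \eqref{Aab}, \eqref{Vab} computed from $\Gamma$'s own data; note that $H(b)-H(a)=h(b)-h(a)=2\int_a^b(f'g-g'f)$ by horizontality, so $A(a,b)$ is intrinsic to $\Gamma$. Thus property (3) will follow once $\Gamma$ is built so that on every compact set of measure close to $1$ there are pairs $a,b$ with $b-a$ arbitrarily small and $A(a,b)\ge \epsilon_0 V(a,b)$ for a fixed $\epsilon_0>0$. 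I stress that conditions (1) and (2) of Theorem \ref{CmWhitney} are \emph{not} obstructions: the product rule for approximate derivatives gives $h_2=2(f_2 g-g_2 f)$ almost everywhere for any absolutely continuous horizontal $\Gamma$ with $f',g'$ approximately differentiable, so \eqref{ODEeq} holds at the approximate level. The failure is purely second-order-in-scale, living at order $(b-a)^{2m}=(b-a)^4$, which second-order differentiability does not detect.

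The construction realizes this anomalous area by inserting fast, small loops into the planar projection $t\mapsto(f(t),g(t))$ at a sequence of scales $\ell_k\downarrow0$. On an interval $J$ of length $\ell_k$ I let $(f,g)$ wind $N_k$ times around a circle of radius $r_k$, momentarily at rest at the endpoints of $J$, while the ``centre'' captured by the quadratic Taylor polynomial moves with velocity tending to $0$. Such a loop encloses area of order $N_k r_k^2$ and deviates from its quadratic Taylor polynomial by order $r_k$. I tune the parameters by requiring $r_k=o(\ell_k^2)$, so that this deviation is $o(\ell_k^2)$ and $f,g$ remain twice $L^p$ differentiable at scale $\ell_k$ for every $p$, and then choose $N_k$ so that the swept area $N_k r_k^2$ is comparable to $\ell_k^4$. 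Because the base velocities vanish, $V(a,b)$ is of order $\ell_k^4$ for endpoints $a,b$ straddling a loop, so $A(a,b)/V(a,b)$ is bounded below by a fixed $\epsilon_0$. A convenient explicit choice is $r_k=\ell_k^{5/2}$ and $N_k\sim\ell_k^{-1}$, for which the loop speed is of order $\ell_k^{1/2}\to0$, keeping all velocities bounded. The scale-$k$ loops are placed periodically with period $p_k$ satisfying $\ell_k\ll p_k$ and $\ell_k/p_k\to0$, so that they are pervasive (every point lies within $p_k$ of a loop) yet of vanishing density. Finally $h$ is defined by the horizontality relation \eqref{liftH}.

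It then remains to verify (1), (2) and the robustness needed for (3). Twice $L^p$ differentiability of $f,g$ follows from $r_k=o(\ell_k^2)$; twice $L^p$ differentiability of $h$ follows because the anomalous area contributes to $h$ a deviation of order $\ell_k^4=o(\ell_k^2)$ from its quadratic polynomial. For (2), the loop-intervals are arranged to have density zero at almost every point, so at such points $f'$ and $g'$ are approximately differentiable (their wild oscillations are confined to a set of density zero at the point), and then $h'=2(f'g-g'f)$ is approximately differentiable by the product rule. For the failure of Lusin I use that, for any compact $K$ with $\mathcal L^1([0,1]\setminus K)$ small and any large $k$, the pervasiveness of the scale-$k$ loops together with the largeness of $K$ produces a loop straddled by two points of $K$ lying in the adjacent smooth regions; this yields the pairs $a,b\in K$ with $A(a,b)\ge\epsilon_0 V(a,b)$ and $b-a\to0$, contradicting Theorem \ref{CmWhitney}(3).

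The main obstacle is the construction itself, and in particular reconciling its two opposing demands: the loop-intervals must have density zero at almost every point, so that $f',g',h'$ are approximately differentiable almost everywhere, yet be spatially pervasive at a whole sequence of scales, so that every compact set of measure close to $1$ still straddles loops and the area discrepancy cannot be removed by discarding a small set. Choosing the periods $p_k$ and lengths $\ell_k$ to decay at compatible rates so that both properties hold, while keeping all velocities bounded and rigorously verifying the uniform lower bound $A(a,b)\ge\epsilon_0 V(a,b)$ on straddling pairs, is the delicate heart of the argument; once the scales are fixed, the verifications of (1) and (2) are comparatively routine.
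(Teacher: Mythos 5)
Your overall mechanism is the same as the paper's: insert loops into the planar projection whose enclosed area is anomalously large compared with what a $C^2$ horizontal curve carrying the same second-order data could sweep, while keeping the loops small and sparse enough that $f,g,h$ remain twice $L^p$ differentiable and $f',g',h'$ approximately differentiable. (The paper uses square loops of side $h_n$ centred at dyadic points $k/2^{n+1}$, and for the negative direction argues directly rather than through Theorem \ref{CmWhitney}: at density points of the agreement set all of $F,F',F'',G,G',G''$ vanish, whence $|H(b)-H(a)|\leq 4(b-a)^4$, against $h(b)-h(a)=4h_{n+1}^2$.)

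However, your quantitative calibration has a genuine gap. You set the swept area to $N_kr_k^2\sim\ell_k^4$, the fourth power of the \emph{loop length}, and assert $V(a,b)\sim\ell_k^4$ for straddling pairs $a,b\in K$. But $V(a,b)\geq(b-a)^4$ by \eqref{Vab}, and nothing forces the agreement set $K$ to contain points within distance $O(\ell_k)$ of any scale-$k$ loop: the scale-$k$ loops occupy total measure only $\sim\ell_k/p_k$, so a complement of measure $\epsilon$ can contain the $C_k\ell_k$-neighbourhood of every scale-$k$ loop for every $k$ with $C_k\to\infty$ (take $C_k\ell_k/p_k=\epsilon 2^{-k}$, compatible with $\ell_k/p_k\to 0$). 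Every straddling pair then satisfies $b-a\geq C_k\ell_k$, so $A(a,b)/V(a,b)\lesssim C_k^{-4}\to 0$ and no contradiction is obtained; with your parameters the curve may in fact admit a $C^2$ horizontal approximation. The area must instead dominate the fourth power of the loop \emph{spacing}: in the paper this is the condition $4^nh_n\to\infty$ (area $4h_{n+1}^2$ against spacing $2^{-(n+1)}$), balanced against the opposing requirement that the accumulated area be $o(t^2)$ at every scale so that $h$ stays twice $L^p$ differentiable (the condition $\lambda_{n+1}^{-2}\sum_{k>n}2^{k-n}h_k^2\to 0$). You would also still need the pigeonhole step (the paper's Lemma \ref{goodpair}) producing density points of the agreement set on both sides of some loop within one spacing --- ``pervasiveness'' alone does not give this --- and two smaller repairs: the periodic placement must exclude overlaps between scales, and approximate/L$^p$ differentiability at points lying unexpectedly close to infinitely many loops requires a Borel--Cantelli argument (the paper's exceptional set $A$), not just density zero of the loops.
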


We use the remainder of this section to prove Theorem \ref{thm2}.

\subsection{Construction of the Horizontal Curve}

\subsubsection{Parameters for the Construction}
Fix decreasing sequences $h_{n}, \lambda_{n}>0$ with
\begin{equation}\label{hlambda1}
\quad \sum_{n=1}^{\infty} 2^{n}\lambda_{n}<\infty, \quad h_{n}/\lambda_{n}\to 0, \quad 4^{n}h_{n}\to \infty, \quad \frac{1}{\lambda_{n+1}^{2}}\sum_{k=n+1}^{\infty} 2^{k-n}h_{k}^{2}\to 0.
\end{equation}
One possible choice is $h_{n}=1/3^n$ and $\lambda_{n}=(2/5)^n$. A consequence of \eqref{hlambda1} is
\begin{equation}\label{hlambda2}
\sum_{n=1}^{\infty} 2^{n}h_{n}<\infty.
\end{equation}
We next fix a decreasing sequence $w_{n}>0$ such that
\begin{equation}\label{w1}
w_{n}\leq 1/2^{6n}, \qquad \frac{1}{\lambda_{n+1}}\sum_{k=n+1}^{\infty} 2^{k-n}w_{k}\to 0,
\end{equation}
and
\begin{equation}\label{w2}
 \frac{1}{\lambda_{n+1}^{2p+1}}\sum_{k=n+1}^{\infty}2^{k-n}w_{k}h_{k}^{p}\to 0 \qquad \mbox{for every }p\geq 1.
 \end{equation}
 This is possible since $w_{n}$ can be chosen very small compared to $h_{n}$ and $\lambda_{n}$.

%

\subsubsection{The Sets $I_{n}$ and $I$}
For each $n\geq 1$ we inductively define sets $I_{n}\subset [0,1]$, each a disjoint union of finitely many open intervals, as follows. Firstly, $I_{1}$ is the open interval with center $1/2$ and radius $w_{1}$. Once $I_{1}, I_{2}, \ldots, I_{n}$ are defined, we define $I_{n+1}$ as the union of those open intervals $J$ with the following properties:
\begin{itemize}
\item $J$ has center $k/2^{n+1}$ for some integer $k$ with $0<k<2^{n+1}$,
\item $J$ has radius $w_{n+1}$,
\item $J$ does not intersect $I_{1}\cup I_{2}\cup \cdots \cup I_{n}$.
\end{itemize}
Define $I=\cup_{n=1}^{\infty}I_{n}$. The set $I_{n}$ is a disjoint union of at most $2^{n-1}$ intervals of length $2w_{n}$. Hence, since $w_{n}\leq 1/2^{6n}$,
\begin{equation}\label{mI}
\mathcal{L}^1(I)\leq \sum_{n=1}^{\infty} \mathcal{L}^1(I_{n}) \leq \sum_{n=1}^{\infty} 2^{n}w_{n}\leq 1/31.
\end{equation}

\subsubsection{Definition of the Horizontal Components} 
We now define $f,g\colon [0,1]\to \mathbb{R}$ which will be the first two components of the curve. In $[0,1]\setminus I$ we set $f$ and $g$ to be identically $0$. Otherwise we proceed as follows. Suppose $J$ is one of the finitely many disjoint open intervals chosen in the definition of $I_{n}$ for some $n\geq 1$. Divide $J$ into $4$ adjacent disjoint equally sized intervals labelled from left to right
\[J_{1}=(p_{1}, p_{2}), \quad J_{2}=[p_{2}, p_{3}], \quad J_{3}=[p_{3}, p_{4}], \quad J_{4}=(p_{4},p_{5}).\]
The maps $f, g$ are piecewise linear functions in $J$ defined as follows:
\begin{enumerate}
\item In $J_{1}$, $f$ is identically $0$ and $g$ is linear with $g(p_1)=0$, $g(p_2)=h_n$.
\item In $J_{2}$, $f$ is linear with $f(p_2)=0$, $f(p_3)=h_n$ and $g$ is identically $h_{n}$.
\item In $J_{3}$, $f$ is identically $h_{n}$ and $g$ is linear with $g(p_3)=h_n$, $g(p_4)=0$.
\item In $J_{4}$, $f$ is linear with $f(p_4)=h_{n}$, $f(p_5)=0$ and $g$ is identically $0$.
\end{enumerate}

\subsubsection{Absolute Continuity of the Horizontal Components}
Clearly $f$ and $g$ are differentiable at all but finitely many points of $I_{n}$ for each $n$, hence at all but countably many points of $I$. Our first task is to prove that $f$ and $g$ are differentiable at almost every point of $[0,1]\setminus I$. Before doing so we prove a lemma which roughly states that at almost every point of $[0,1]\setminus I$ the maps $f$ and $g$ do not see `big jumps' unexpectedly close to $x$.

For $x\in \bbR$ and $S\subset \bbR$ we denote $d(x,S):=\inf\{|x-y| :y\in S\}$. For  $n\geq 1$, define
\[A_{n}=\{x\in [0,1]\setminus I:d(x,I_{1}\cup \cdots \cup I_{n})<\lambda_{n}\}\]
and let
\[A:=\limsup A_{n}\subset [0,1]\setminus I.\]
By definition of the limit superior, for any $x\in [0,1]\setminus (I\cup A)$, there exists $N(x)>0$ such that $n>N(x)$ implies
\[d(x,I_{1}\cup \cdots \cup I_{n})\geq \lambda_{n}.\]
Roughly speaking, this states that if $x\in [0,1]\setminus (I\cup A)$ then on small scales near to $x$ one sees only relatively small intervals. We will use this fact repeatedly later.

\begin{lemma}\label{limsup0}
The set $A$ has Lebesgue measure zero.
\end{lemma}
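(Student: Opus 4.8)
The plan is to prove $\mathcal{L}^1(A) = 0$ via the (easy direction of the) Borel--Cantelli lemma: since $A = \limsup_n A_n$, it suffices to show $\sum_{n=1}^\infty \mathcal{L}^1(A_n) < \infty$. Thus the whole argument reduces to a sufficiently good upper bound on the measure of each individual $A_n$, which I would obtain by counting the intervals comprising $I_1 \cup \cdots \cup I_n$ and then measuring the thin collars around them.

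First I would count the intervals making up $I_1 \cup \cdots \cup I_n$. By construction each $I_k$ consists of at most $2^{k-1}$ intervals, so $I_1 \cup \cdots \cup I_n$ is a union of at most $\sum_{k=1}^{n} 2^{k-1} < 2^n$ intervals. Next I observe that $A_n$ lies in the open $\lambda_n$-neighborhood of $I_1 \cup \cdots \cup I_n$ but, crucially, avoids $I$ and hence avoids $I_1 \cup \cdots \cup I_n$ itself. Therefore $A_n$ is covered by the collars consisting of points within distance $\lambda_n$ of one of these intervals but lying outside it; each such collar is a union of two intervals of length at most $\lambda_n$, so it contributes at most $2\lambda_n$. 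Summing over the at most $2^n$ intervals yields $\mathcal{L}^1(A_n) \leq 2^n \cdot 2\lambda_n = 2^{n+1}\lambda_n$.

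Finally I would sum these bounds: $\sum_{n=1}^\infty \mathcal{L}^1(A_n) \leq 2\sum_{n=1}^\infty 2^n \lambda_n < \infty$ by the first condition in \eqref{hlambda1}, and Borel--Cantelli then gives $\mathcal{L}^1(A) = 0$. The one point requiring care — the main, if modest, obstacle — is to genuinely exploit that $A_n \subseteq [0,1]\setminus I$. If one bounded $A_n$ instead by the full $\lambda_n$-neighborhood, \emph{including} the intervals themselves, then the measure of those intervals would contribute a term comparable to $\sum_{k=1}^{n} 2^{k} w_k$, whose partial sums converge to a positive constant and so would force $\sum_n \mathcal{L}^1(A_n)$ to diverge. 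Excluding $I$ removes precisely this offending term and leaves only the summable collar contribution $2^{n+1}\lambda_n$, which is exactly what the hypothesis $\sum_n 2^n \lambda_n < \infty$ is designed to control.
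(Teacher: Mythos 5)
Your proof is correct and follows essentially the same route as the paper's: bound $\mathcal{L}^1(A_n)$ by counting the at most $2^n$ component intervals of $I_1\cup\cdots\cup I_n$ and measuring the two side-collars of width $\lambda_n$ around each (the paper gets $2\lambda_n(2^n-1)$, you get $2^{n+1}\lambda_n$, the same up to a constant), then apply Borel--Cantelli using $\sum_n 2^n\lambda_n<\infty$. Your closing remark correctly identifies why the exclusion of $I$ from $A_n$ is essential, which the paper uses implicitly in the same way.
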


\begin{proof}
The set $I_{i}$ consists of $2^{i-1}$ intervals and is contained in $I$. Hence
\[\mathcal{L}^1(\{x\notin I\colon d(x,I_{i})<\lambda_{n}\} ) \leq 2^{i-1}2\lambda_{n}=2^{i}\lambda_{n}.\]
Hence
\[\mathcal{L}^1(A_{n})=(2+2^{2}+\cdots+2^{n})\lambda_{n}=2\lambda_{n}(2^n-1).\]
Since $\sum_{n=1}^{\infty}2^n\lambda_{n}<\infty$ it follows $\sum_{n=1}^{\infty} \mathcal{L}^1(A_{n})<\infty$. The Borel Cantelli lemma gives the conclusion.
\end{proof}

\begin{lemma}
For every $x\in (0,1)\setminus (I\cup A)$, $f$ and $g$ are differentiable at $x$ with $f'(x)=g'(x)=0$.
\end{lemma}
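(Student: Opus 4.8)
The plan is to show that for $x \in (0,1) \setminus (I \cup A)$, both difference quotients $\frac{f(y)-f(x)}{y-x}$ and $\frac{g(y)-g(x)}{y-x}$ tend to $0$ as $y \to x$. Since $x \notin I$, we have $f(x)=g(x)=0$, so this reduces to showing $\frac{f(y)}{y-x} \to 0$ and $\frac{g(y)}{y-x} \to 0$. The key structural fact is that $f$ and $g$ are supported on $I = \bigcup_n I_n$, and on each component interval $J \subset I_n$ the functions $f,g$ take values in $[0, h_n]$, so $|f(y)|, |g(y)| \leq h_n$ whenever $y$ lies in a component of $I_n$. The whole game is therefore to compare the size $h_n$ of the "jump" near $y$ against the distance $|y-x|$.

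Let me reconstruct the geometry. Since $x \notin I \cup A$, there is $N(x)$ such that for all $n > N(x)$ we have $d(x, I_1 \cup \cdots \cup I_n) \geq \lambda_n$. **First I would** fix a point $y$ close to $x$ with $y \neq x$, and suppose $y$ lies in some component interval $J$ of $I_n$ (if $y \notin I$ then $f(y)=g(y)=0$ and the quotient vanishes, so we may assume $y \in I_n$ for some $n$). I want to force $n$ to be large when $y$ is close to $x$, and then bound $h_n / |y-x|$. The point is that $y \in I_n$ means $d(x, I_n) \leq |x-y|$, hence $d(x, I_1 \cup \cdots \cup I_n) \leq |x-y|$. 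Combined with $d(x, I_1 \cup \cdots \cup I_n) \geq \lambda_n$ (valid once $n > N(x)$), this gives $\lambda_n \leq |x-y|$.

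**The main step** is then to convert the bound $|f(y)| \leq h_n$ together with $\lambda_n \leq |x - y|$ into a bound of the form $|f(y)|/|x-y| \leq h_n/\lambda_n$, which tends to $0$ by the hypothesis $h_n/\lambda_n \to 0$ from \eqref{hlambda1}. Concretely, for $y$ sufficiently close to $x$ (so that $|x-y|$ is small enough to force the relevant index $n$ to exceed $N(x)$), any component of $I_n$ containing $y$ must have index $n > N(x)$; otherwise $y$ would lie at distance less than some fixed positive threshold from a fixed finite union $I_1 \cup \cdots \cup I_{N(x)}$, contradicting smallness of $|x-y|$ once we note $x \notin I_1 \cup \cdots \cup I_{N(x)}$ and these are finitely many closed-away intervals. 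Having secured $n > N(x)$, I combine the two inequalities:
\[
\frac{|f(y)|}{|x-y|} \leq \frac{h_n}{\lambda_n} \to 0,
\]
and identically for $g$. Since this holds for all $y$ near $x$, the difference quotients vanish in the limit, giving $f'(x) = g'(x) = 0$.

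**The point requiring care** is the claim that $|x-y|$ small forces the index $n$ (of whatever component of $I$ contains $y$) to be large. I would handle this by contradiction using finiteness: the set $I_1 \cup \cdots \cup I_{N(x)}$ is a finite union of intervals not containing $x$ (as $x \notin I$), so $x$ has a fixed positive distance $\rho_0 := d(x, I_1 \cup \cdots \cup I_{N(x)}) > 0$ from it; then for $|x - y| < \rho_0$, the point $y$ cannot lie in any $I_j$ with $j \leq N(x)$, so if $y \in I$ it lies in $I_n$ with $n > N(x)$, which is exactly what the preceding paragraph needs. This dichotomy — $y \notin I$ gives a zero quotient, while $y \in I$ forces a large index and hence a small ratio $h_n/\lambda_n$ — covers all cases and completes the argument. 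The only genuinely delicate bookkeeping is ensuring the inequalities $\lambda_n \leq |x-y|$ and $|f(y)| \leq h_n$ are applied with the correct index $n$ associated to the \emph{specific} component containing $y$, which the distance argument pins down.
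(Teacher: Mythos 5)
Your argument is correct and follows essentially the same route as the paper's: both use the defining property of $A$ (that $d(x, I_1\cup\cdots\cup I_n)\ge\lambda_n$ for $n>N(x)$) to force any point of $I$ near $x$ to lie in a component of some $I_n$ with $n$ large and $\lambda_n\le|x-y|$, and then conclude via $|f|\le h_n$ on such components together with $h_n/\lambda_n\to 0$; the paper merely indexes by the scale (choosing $n$ with $\lambda_{n+1}\le|t|<\lambda_n$) rather than by the component containing $y$. One cosmetic point: the positivity of your $\rho_0$ should be justified via $x\notin A$ (e.g. $d(x,I_1\cup\cdots\cup I_{N(x)+1})\ge\lambda_{N(x)+1}>0$) rather than merely via $x\notin I$, since the components of the $I_j$ are open intervals and a point outside an open set can a priori be at distance zero from it.
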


\begin{proof}
Fix a point $x$ as in the statement of the lemma and corresponding $N(x)>0$  such that $n>N(x)$ implies
\[d(x,I_{1}\cup \cdots \cup I_{n})\geq \lambda_{n}.\]
For all $t$ sufficiently small there is $n>N(x)$ such that $\lambda_{n+1}\leq |t|< \lambda_{n}$. Then
\[d(x, I_{1}\cup \cdots \cup I_{n})\geq \lambda_{n}> |t|.\]
This implies $x+t\notin I_{1}\cup \cdots \cup I_{n}$. By definition of $f$ we see $0\leq f(x+t)\leq h_{n+1}$. Since $x\notin I$ we have $f(x)=0$ and so
\[\left| \frac{f(x+t)-f(x)}{t}\right| \leq \frac{h_{n+1}}{\lambda_{n+1}}.\]
Since $h_{n}/\lambda_{n}\to 0$, it follows that $f$ is differentiable at $x$ with $f'(x)=0$. The argument is the same for $g$.
\end{proof}

We have now shown that $f$ and $g$ are differentiable almost everywhere on $[0,1]$.

\begin{proposition}
The maps $f,g\colon [0,1]\to \mathbb{R}$ are absolutely continuous.
\end{proposition}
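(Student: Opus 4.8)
The plan is to avoid any appeal to a.e.\ differentiability theorems and instead prove directly that $f$ (and, identically, $g$) is the indefinite integral of its derivative, i.e.\ that $f(x)=\int_0^x f'(t)\dd t$ for every $x\in[0,1]$ with $f'\in L^1([0,1])$. This representation is equivalent to absolute continuity, and it has the bonus of re-proving continuity of $f$ for free.

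First I would record the structure of $f'$. We have already shown $f$ is differentiable almost everywhere, with $f'=0$ off $I$, while on each of the open intervals $J$ comprising $I$ the function $f$ is piecewise linear. On an interval $J\subset I_n$ the map $f$ increases linearly from $0$ to $h_n$ on $J_2$ and later decreases linearly from $h_n$ back to $0$ on $J_4$, so $f$ vanishes at both endpoints of $J$ and $\int_J|f'|=2h_n$. Since $I_n$ consists of at most $2^{n-1}$ intervals and $I=\bigsqcup_J J$ is a disjoint union,
\[
\int_0^1 |f'| = \sum_J \int_J |f'| \le \sum_{n=1}^\infty 2^{n-1}\cdot 2h_n = \sum_{n=1}^\infty 2^n h_n < \infty
\]
by \eqref{hlambda2}; hence $f'\in L^1([0,1])$.

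Then I would establish the Fundamental Theorem of Calculus identity by decomposing the integral over the intervals of $I$. Since $f'=0$ almost everywhere off $I$ and the $J$ are disjoint with $\sum_J\int_J|f'|<\infty$, countable additivity gives $\int_0^x f'=\sum_J\int_{[0,x]\cap J}f'$ for every $x$. Any $J$ contained in $[0,x]$ contributes $\int_J f'=f(\sup J)-f(\inf J)=0$, and intervals lying entirely to the right of $x$ contribute nothing. If $x\notin I$ these are the only cases, so $\int_0^x f'=0=f(x)$. If $x\in I$, then $x$ lies in a unique interval $J$, the complete intervals to its left again contribute $0$, and the straddling interval contributes $\int_{\inf J}^x f'=f(x)-f(\inf J)=f(x)$, using $f(\inf J)=0$ and the ordinary FTC for the piecewise linear function $f|_{\overline J}$. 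In every case $\int_0^x f'=f(x)$, so $f$ is absolutely continuous. The argument for $g$ is identical: on each $J\subset I_n$ the active subintervals are $J_1,J_3$ instead of $J_2,J_4$, but $g$ still vanishes at the endpoints of $J$ with $\int_J|g'|=2h_n$.

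The hard part, and the reason one cannot simply quote a.e.\ differentiability, is ruling out a singular (Cantor-type) part: a.e.\ differentiability together with $f'\in L^1$ and bounded variation does \emph{not} imply the FTC. The resolution exploited above is that all of the variation of $f$ is carried inside $I$, where $f$ is genuinely piecewise linear, and that each complete interval $J$ contributes zero net increment; the decomposition of $\int_0^x f'$ over these intervals is precisely the step that certifies the absence of a singular part.
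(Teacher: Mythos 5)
Your proof is correct and follows essentially the same route as the paper: first establish $f'\in L^{1}([0,1])$ by summing the variation over the component intervals of $I$ (your count $\int_{J}|f'|=2h_{n}$ is in fact sharper than the paper's bound $8h_{n}$, but both suffice by \eqref{hlambda2}), and then verify the fundamental theorem of calculus identity by decomposing the integral over those intervals, each complete one contributing zero net increment while $f'=0$ almost everywhere off $I$. The paper phrases the second step as $f(b)-f(a)=\int_{a}^{b}f'$ for all $a<b$ after reducing to $a,b\notin I$, whereas you anchor at $0$ and treat a straddling interval explicitly; the content is identical.
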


\begin{proof}
Suppose $J$ is one of the intervals chosen in the construction of $I_{n}$ for some $n\geq 1$. Then for any $x\in J$ we have
\[|f'(x)|\leq h_{n}/(w_{n}/4)=4h_{n}/w_{n}.\]
Since $\mathcal{L}^1(J)\leq 2w_{n}$ it follows that $\int_{J}|f'|\leq 8h_{n}$. Since there are at most $2^{n-1}$ disjoint intervals in the construction of $I_{n}$, we have $\int_{I_{n}}|f'|=2^{n+2}h_{n}$ for every $n\geq 1$. Since $f'=0$ almost everywhere outside $I$, we deduce,
\[\int_{0}^{1}|f'|\leq \sum_{n=1}^{\infty} 2^{n+2}h_{n}<\infty.\]
Hence $f'$ is integrable on $[0,1]$.

We now claim
\begin{equation}\label{FTC}
f(b)-f(a)=\int_{a}^{b}f' \quad \mbox{whenever }a<b.
\end{equation}
Clearly \eqref{FTC} is satisfied if $a$ and $b$ belong to a common chosen interval $J$ from the definition of $I$. Indeed, $f$ is piecewise linear and hence absolutely continuous inside any such interval. Suppose this is not the case. By splitting the integral if necessary, to prove \eqref{FTC} we may assume $a, b\notin I$. If $J=[c,d]$ is any interval chosen in the construction of $I$ which is contained in $(a,b)$, then
\[\int_{J}f'=f(d)-f(c)=0.\]
There are countably many such intervals and $f'=0$ at almost every point outside $I$. Hence $\int_{a}^{b}f'=0$. Since $a,b\notin I$ we have $f(b)=f(a)=0$. Hence \eqref{FTC} holds. This proves that $f$ is absolutely continuous. The argument for $g$ is the same.
\end{proof}

\subsubsection{Vertical Component of the Curve}
Since $f,g$ are bounded and $f',g'$ are integrable, the products $f'g$ and $g'f$ are integrable. We define $h\colon [0,1]\to \bbR$ by
\[h(x):=2\int_{0}^{x}(f'g-g'f) \qquad \mbox{for }x\in [0,1].\]
Clearly $h$ is absolutely continuous on $[0,1]$. By Lemma \ref{lift}, $\Gamma:=(f,g,h)$ is an absolutely continuous horizontal curve. It is easy to check that $h$ is piecewise linear since each interval chosen in the construction of $I$. We also record the following fact for later.

\begin{lemma}\label{hinc}
Suppose $J=(a,b)$ is one of the connected components of $I_{n}$. Then
\[h(b)-h(a)= 4h_{n}^{2}.\]
\end{lemma}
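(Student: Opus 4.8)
The plan is to evaluate $h(b)-h(a)$ directly from the definition of the vertical component. Since $h(x)=2\int_0^x(f'g-g'f)$, we have $h(b)-h(a)=2\int_a^b(f'g-g'f)$, so the whole computation reduces to integrating the one-form $f'g-g'f$ over the single interval $J=(a,b)$. The natural approach is to split $J$ into the four adjacent quarters $J_1,J_2,J_3,J_4$ used in the construction and to treat each separately, exploiting that on each quarter one of the two components is either identically zero or identically equal to the constant $h_n$.

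Concretely, I would argue as follows. On $J_1$ we have $f\equiv 0$, so $f'=0$ and $f=0$ there, whence $f'g-g'f\equiv 0$ and the contribution vanishes; symmetrically, on $J_4$ we have $g\equiv 0$, so again $f'g-g'f\equiv 0$. On $J_2$ the component $g$ is the constant $h_n$, so $g'=0$ and the integrand collapses to $h_nf'$; by the Fundamental Theorem of Calculus $\int_{J_2}f'=f(p_3)-f(p_2)=h_n$, giving $\int_{J_2}(f'g-g'f)=h_n^2$. On $J_3$ the component $f$ is the constant $h_n$, so $f'=0$ and the integrand is $-h_ng'$; since $\int_{J_3}g'=g(p_4)-g(p_3)=-h_n$, this quarter also contributes $h_n^2$. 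Summing the four pieces yields $\int_a^b(f'g-g'f)=2h_n^2$, and multiplying by the factor $2$ from the definition of $h$ gives $h(b)-h(a)=4h_n^2$, as claimed.

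There is no genuine obstacle here: the statement follows from an elementary piecewise integration, and the only point requiring care is the bookkeeping of which component is constant (and which is zero, together with its derivative) on each of the four subintervals, so that the two ``corner'' quarters $J_2,J_3$ each contribute $h_n^2$ while the two ``edge'' quarters $J_1,J_4$ contribute nothing. Geometrically this is transparent, since the planar curve $(f,g)$ traverses the boundary of a square of side $h_n$ on $J$, and $h$ records (twice) the signed area it encloses; one could present the argument in this language, but the direct quarter-by-quarter evaluation via the Fundamental Theorem of Calculus is the cleanest route and is what I would write out.
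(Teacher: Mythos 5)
Your proof is correct and follows essentially the same elementary route as the paper: a direct evaluation of $2\int_a^b(f'g-g'f)$ from the piecewise definition of $f$ and $g$ on the four quarters of $J$. The only cosmetic difference is that the paper first uses $f(a)=f(b)=0$ to rewrite the integral as $4\int_a^b f'g$ (so only $J_2$ contributes), whereas you account for $J_2$ and $J_3$ separately; both computations give $4h_n^2$.
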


\begin{proof}
Since $f(a)=f(b)=0$ we know
\[h(b)-h(a)=2\int_{a}^{b}(f'g-g'f)=4\int_{a}^{b}f'g.\]
From the construction of $f$ and $g$ and the fact $(b-a)/4=w_{n}/2$ we obtain
\[h(b)-h(a)=4(w_{n}/2)(h_{n}/(w_{n}/2))h_{n}=4h_{n}^{2}.\]
\end{proof}

\subsection{Differentiability of the Horizontal Curve}

\begin{proposition}\label{lpf}
At almost every point $x\in [0,1]$, the maps $f, g, h\colon [0,1]\to \bbR$ are twice $L^p$ differentiable at $x$ for all $p\geq 1$. For every point $x\in (0,1)\setminus (I\cup A)$, the second order $L^p$ derivatives of $f, g, h$ at $x$ are identically $f(x)=0$, $g(x)=0$, and $h(x)$ (possibly non-zero) respectively.
\end{proposition}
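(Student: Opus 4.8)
The plan is to split $[0,1]$ into the points lying in $I$ and the points lying in $(0,1)\setminus(I\cup A)$, which together exhaust $[0,1]$ up to a null set since $\mathcal{L}^{1}(A)=0$ by Lemma \ref{limsup0}. For almost every point of $I$ the claim is immediate: on each of the four subintervals of every component the maps $f,g$ are affine and $h$ is affine as well (being the integral of the function $f'g-g'f$, which is constant on each subinterval). Hence at every point of $I$ other than the countably many endpoints of these subintervals, all three maps coincide with an affine function near $x$, so $\dashint_{B(x,\rho)}|u-P|^p=0$ for small $\rho$ and they are trivially twice $L^p$ differentiable for every $p$. It then remains to treat a fixed $x\in(0,1)\setminus(I\cup A)$, with associated $N(x)$ as after Lemma \ref{limsup0}. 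Given small $\rho>0$ I would choose $n>N(x)$ with $\lambda_{n+1}\le\rho<\lambda_n$, so that $d(x,I_1\cup\cdots\cup I_n)\ge\lambda_n>\rho$ and therefore $B(x,\rho)$ meets only components belonging to $I_k$ with $k\ge n+1$. Since these components are centred at points of $\{j/2^k:j\in\mathbb{Z}\}$ and have radius $w_k$, the number of them meeting $B(x,\rho)$ is at most $2^{k+1}\rho+1$, and on each one $0\le f,g\le h_k$.

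For $f$ (and identically for $g$) I would bound the $L^p$ average directly, using that $f$ vanishes off the thin set $I$:
\[
\dashint_{B(x,\rho)}|f|^p \le \frac{1}{2\rho}\sum_{k\ge n+1}h_k^p\,\mathcal{L}^{1}(I_k\cap B(x,\rho)) \le \frac{1}{2\rho}\sum_{k\ge n+1}h_k^p\,2w_k(2^{k+1}\rho+1).
\]
Expanding the factor $2^{k+1}\rho+1$ produces a sum $2\cdot2^n\sum_{k\ge n+1}2^{k-n}h_k^pw_k$ together with $\tfrac1\rho\sum_{k\ge n+1}h_k^pw_k$. Dividing by $\rho^{2p}\ge\lambda_{n+1}^{2p}$, the first becomes $2\cdot 2^n\lambda_{n+1}\cdot\lambda_{n+1}^{-(2p+1)}\sum_{k\ge n+1}2^{k-n}h_k^pw_k$, which tends to $0$ because $2^n\lambda_{n+1}\le 2^n\lambda_n\to0$ by \eqref{hlambda1} and the remaining factor tends to $0$ by \eqref{w2}; the second is dominated by $\lambda_{n+1}^{-(2p+1)}\sum_{k\ge n+1}2^{k-n}h_k^pw_k\to0$, again by \eqref{w2}. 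Thus $\dashint_{B(x,\rho)}|f|^p=o(\rho^{2p})$, so $f$ is twice $L^p$ differentiable at $x$ with second order $L^p$ derivative the zero polynomial $f(x)=0$, and the same holds for $g$.

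For $h$ I would first record that $h$ is non-decreasing: on each component the increment arises only from the two middle subintervals and is non-negative, while $h$ is constant off $I$. Hence for $y\in B(x,\rho)$,
\[
|h(y)-h(x)|\le h(x+\rho)-h(x-\rho)\le\sum_{k\ge n+1}4h_k^2(2^{k+1}\rho+1),
\]
where each full component of $I_k$ contributes exactly $4h_k^2$ by Lemma \ref{hinc}. Splitting $2^{k+1}\rho+1$ as before and dividing by $\rho^2\ge\lambda_{n+1}^2$, the resulting terms are controlled by $8\cdot2^n\lambda_{n+1}\cdot\lambda_{n+1}^{-2}\sum_{k\ge n+1}2^{k-n}h_k^2$ and $4\lambda_{n+1}^{-2}\sum_{k\ge n+1}h_k^2$, both of which tend to $0$ using $2^n\lambda_n\to0$ and the last limit in \eqref{hlambda1}. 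Therefore $h(x+\rho)-h(x-\rho)=o(\rho^2)$, and since the integrand is bounded by this quantity we get $\dashint_{B(x,\rho)}|h-h(x)|^p=o(\rho^{2p})$; thus $h$ is twice $L^p$ differentiable at $x$ with second order $L^p$ derivative the constant polynomial $h(x)$.

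The main obstacle is the estimate for $f$ and $g$. A naive pointwise bound $|f|\le h_{n+1}$ on $B(x,\rho)$ is useless, because $h_{n+1}/\rho^2\le h_{n+1}/\lambda_{n+1}^2$ need not tend to zero; the decisive point is that $f$ is nonzero only on the very thin set $I$, whose intersection with $B(x,\rho)$ has measure governed by the weights $w_k$. Condition \eqref{w2} is designed precisely so that, once both the volume factor $2^n\lambda_n$ and the discrete interval count are accounted for, the $p$-th power average decays faster than $\rho^{2p}$. By contrast the vertical estimate needs no smallness of $w_k$, only the quadratic smallness of the increments $4h_k^2$ encoded in \eqref{hlambda1}, reflecting the genuinely different mechanisms by which the horizontal and vertical components flatten out at $x$.
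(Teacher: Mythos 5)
Your proof is correct and follows essentially the same route as the paper: locally affine behaviour handles almost every point of $I$, and for $x\notin I\cup A$ one picks $n$ with $\lambda_{n+1}\le\rho<\lambda_n$, counts the components of $I_k$ ($k>n$) meeting $B(x,\rho)$, and invokes \eqref{w2} for $f,g$ and the last limit in \eqref{hlambda1} for $h$. Your only departures are cosmetic: you count intervals as $2^{k+1}\rho+1$ and treat the two resulting terms separately (using $2^{n}\lambda_{n}\to 0$) where the paper bounds the count by $2^{k-n+1}$ outright, and you make explicit the monotonicity of $h$ that the paper leaves implicit when applying Lemma \ref{hinc} to partially covered components.
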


\begin{proof}
Recall that $f, g, h$ are piecewise linear inside each of the countably many intervals whose disjoint union is $I$. Hence $f, g, h$ are twice $L^p$ differentiable for all $p\geq 1$ at all but countably many points of $I$. Suppose $x\notin (I\cup A)$. To show $f$ is twice $L^p$ differentiable at $x$ we will show that for every $p\geq 1$
\[\lim_{t\to 0} \frac{1}{t^{2p+1}} \int_{[x-t,x+t]}|f(y)|^{p}\dd y=0.\]
Using the definition of $A$, we may choose $N(x)>0$  such that $n>N(x)$ implies
\[d(x,I_{1}\cup \cdots \cup I_{n})\geq \lambda_{n}.\]
Recall that $\sum_{n=1}^{\infty} 2^{n}\lambda_{n}<\infty$ which implies $\lambda_{n}\leq 1/2^{n}$ for all sufficiently large $n$. Given any $t>0$ sufficiently small, we may choose $n>N(x)$ with
\[\lambda_{n+1}\leq t<\lambda_{n}\leq 1/2^n.\]
This implies
\[[x-t,x+t]\cap (I_{1}\cup \cdots \cup I_{n})=\varnothing.\]
The interval $[x-t,x+t]$  has length at most $2\lambda_{n}\leq 1/2^{n-1}$. Since the intervals in $I_{k}$ have centers separated by at least distance $1/2^{k}$, it follows that $[x-t,x+t]$ can intersect at most $2^{k-n+1}$ intervals from $I_{k}$ for $k>n$. Recall that $x\notin I$ gives $f(x)=0$, $|f(y)|\leq h_{k}$ for $y$ in an interval from $I_{k}$, and that $t>\lambda_{n+1}$. We have
\begin{align*}
\frac{1}{t^{2p+1}} \int_{[x-t,x+t]}|f(y)|^{p}\dd y &= \frac{1}{t^{2p+1}} \int_{[x-t,x+t]\cap I}|f(y)|^{p}\dd y \\
&\leq \frac{2}{t^{2p+1}}\sum_{k=n+1}^{\infty} 2^{k-n+1}w_{k}h_{k}^{p}\\
&\leq \frac{2}{\lambda_{n+1}^{2p+1}}\sum_{k=n+1}^{\infty}2^{k-n+1}w_{k}h_{k}^{p}.
\end{align*}
The previous line converges to $0$ as $n\to \infty$ for every $p\geq 1$ by definition of the sequences $w_{k}, h_{k}, \lambda_{k}$. The argument for $g$ is exactly the same. Finally to show $h$ is twice $L^p$ differentiable at $x$ we will show that for every $p\geq 1$
\[\lim_{t\to 0} \frac{1}{t^{2p+1}} \int_{[x-t,x+t]}|h(y)-h(x)|^{p}\dd y=0.\]
Recall that $[x-t,x+t]$ can intersect at most $2^{k-n+1}$ intervals from $I_{k}$ for $k>n$. Hence, using Lemma \ref{hinc}, for any $y\in [x-t,x+t]$ we have
\[|h(y)-h(x)|\leq \sum_{k=n+1}^{\infty} 2^{k-n+3}h_{k}^{2}.\]
Hence
\begin{align*}
\frac{1}{t^{2p+1}} \int_{[x-t,x+t]}|h(y)-h(x)|^{p}\dd y &\leq \frac{2}{t^{2p}}\left( \sum_{k=n+1}^{\infty} 2^{k-n+3}h_{k}^{2} \right)^{p}\\
&\leq 2 \left( \frac{8}{\lambda_{n+1}^{2}} \sum_{k=n+1}^{\infty} 2^{k-n}h_{k}^{2} \right)^{p}.
\end{align*}
We conclude by noticing the last line converges to $0$ as $n\to \infty$ for every $p\geq 1$.
\end{proof}

\begin{proposition}
The maps $f', g', h'$ are once approximately differentiable almost everywhere. In particular, $f'$ and $g'$ have approximate derivative $0$ at every point of $(0,1)\setminus (I\cup A)$.
\end{proposition}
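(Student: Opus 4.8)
The plan is to reduce the statement to two facts: that $f'$, $g'$ and $h'$ vanish almost everywhere outside $I$, and that $I$ has Lebesgue density zero at every point of $(0,1)\setminus(I\cup A)$. Once these are in hand, approximate differentiability with vanishing approximate derivative follows at once at every point of $(0,1)\setminus(I\cup A)$, while the remaining, at most countably many, points lie in the interiors of the pieces constituting $I$ and are handled by ordinary differentiability.

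For the first fact, recall that $f'(x)=g'(x)=0$ at every $x\in(0,1)\setminus(I\cup A)$, while the horizontality relation \eqref{liftH'} gives $h'=2(f'g-g'f)$ almost everywhere; since $f$, $g$, $f'$ and $g'$ all vanish on $(0,1)\setminus(I\cup A)$ and $A$ is null by Lemma \ref{limsup0}, we obtain $f'=g'=h'=0$ almost everywhere on $[0,1]\setminus I$. The key step is the density estimate. Fix $x\in(0,1)\setminus(I\cup A)$ and the associated $N(x)$ with $d(x,I_1\cup\cdots\cup I_n)\geq\lambda_n$ for $n>N(x)$. For small $t>0$ choose $n>N(x)$ with $\lambda_{n+1}\leq t<\lambda_n$; then $[x-t,x+t]$ misses $I_1\cup\cdots\cup I_n$ and, exactly as in Proposition \ref{lpf}, meets at most $2^{k-n+1}$ of the intervals composing $I_k$ for each $k>n$. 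Hence
\[\frac{\mathcal{L}^1([x-t,x+t]\cap I)}{2t}\leq\frac{1}{2\lambda_{n+1}}\sum_{k=n+1}^{\infty}2^{k-n+2}w_k=\frac{2}{\lambda_{n+1}}\sum_{k=n+1}^{\infty}2^{k-n}w_k,\]
which tends to $0$ as $n\to\infty$, and hence as $t\to0$, by the second condition in \eqref{w1}. Thus $I$ has density zero at $x$.

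To conclude, fix $x\in(0,1)\setminus(I\cup A)$ and $\varepsilon>0$. Since $f'$ vanishes off $I\cup A$ with $A$ null, the set $\{y:|f'(y)|>\varepsilon|y-x|\}$ is contained, up to a null set, in $I$, so it has density zero at $x$ by the estimate above; therefore $\aplim_{y\to x}|f'(y)|/|y-x|=0$, which is exactly the assertion that $f'$ is approximately differentiable at $x$ with approximate derivative the zero polynomial. The same argument applies verbatim to $g'$, and, using $h'=0$ almost everywhere off $I$, to $h'$ as well. Finally, at all but countably many points of $I$ the maps $f,g$ are locally linear and $h$ is locally quadratic, so $f',g'$ are locally constant and $h'$ is locally affine; in particular all three are classically, hence approximately, differentiable there. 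As $A$ together with the countable exceptional set is null, $f',g',h'$ are approximately differentiable almost everywhere. The only genuine computation is the density estimate, and that is where I expect the work to lie; everything else is bookkeeping about where the three functions vanish.
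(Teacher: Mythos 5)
Your proof is correct and follows essentially the same route as the paper: the same choice of $n$ with $\lambda_{n+1}\leq t<\lambda_n$, the same count of at most $2^{k-n+1}$ intervals of $I_k$ meeting $[x-t,x+t]$, and the same bound $\frac{1}{2\lambda_{n+1}}\sum_{k=n+1}^{\infty}2^{k-n+2}w_k\to 0$, with $h'$ handled via $h'=2(f'g-g'f)$ and the points of $I$ handled by piecewise smoothness. Your repackaging as ``$f',g',h'$ vanish a.e.\ off $I$ plus $I$ has density zero at $x$'' is only a cosmetic reorganization of the paper's direct estimate on $\mathcal{L}^1(\{y\in[x-t,x+t]:f'(y)\neq 0\})/2t$.
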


\begin{proof}
Approximate differentiability of $f', g', h'$ at all but countably many points of $I$ is clear since $f, g, h$ are piecewise linear inside each interval chosen during the construction of $I$. Recall that $f'(x)=g'(x)=0$  for every point $x\in (0,1)\setminus (I\cup A)$. Fix such an $x$. Choose corresponding $N(x)>0$  such that $n>N(x)$ implies
\[d(x,I_{1}\cup \cdots \cup I_{n})\geq \lambda_{n}.\]
As in the proof of Proposition \ref{lpf}, given any $t>0$ sufficiently small we may choose $n>N(x)$ such that
\[\lambda_{n+1}\leq t<\lambda_{n}\leq 1/2^n,\]
which implies
\[[x-t,x+t]\cap (I_{1}\cup \cdots \cup I_{n})=\varnothing.\]
Again it follows that $[x-t,x+t]$ can intersect at most $2^{k-n+1}$ intervals from $I_{k}$ for $k>n$. Recalling that $f'(x)=0$ at every point of $(0,1)\setminus (I\cup A)$, we have
\begin{align*}
\frac{\mathcal{L}^1(\{y\in [x-t,x+t]:f'(y)>0\})}{2t} &\leq \frac{\mathcal{L}^1([x-t,x+t] \cap I)}{2t}\\
&\leq \frac{1}{2\lambda_{n+1}} \sum_{k=n+1}^{\infty} 2^{k-n+2}w_{k}.
\end{align*}
Since the previous line converges to $0$ as $n\to \infty$, it follows $f'$ is approximately differentiable at $x$ with approximate derivative $0$. The argument for $g$ is the same. For $h$ we recall that $h'=2(f'g-g'f)$ almost everywhere. Combining this with the fact $f'(x)=g'(x)=0$ for every point $x\in (0,1)\setminus (I\cup A)$ gives $h'(x)=0$ for almost every $x\in (0,1)\setminus (I\cup A)$. For such $x$ the same argument as above applies, giving the desired conclusion. 
\end{proof}

\subsection{No $C^2$ Horizontal Lusin Approximation}

\begin{proposition}\label{noLusin}
The curve $\Gamma$ does not have the $C^2$ horizontal Lusin approximation property.
\end{proposition}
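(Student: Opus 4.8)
The plan is to argue by contradiction. Suppose $\Gamma$ does admit a $C^2$ horizontal Lusin approximation. Fix a small $\epsilon>0$ (say $\epsilon<1/100$); by Definition \ref{Lusinm} there is a $C^2$ horizontal curve $\widetilde\Gamma=(\tilde f,\tilde g,\tilde h)\colon[0,1]\to\bbH$ agreeing with $\Gamma$ off a set of measure $<\epsilon$. Write $E$ for the agreement set and put $M:=\max(\|\tilde f''\|_\infty,\|\tilde g''\|_\infty)<\infty$, finite since $\widetilde\Gamma$ is $C^2$ on the compact interval. Set $E'=E\setminus(I\cup A)$. By \eqref{mI} and Lemma \ref{limsup0} we have $\mathcal L^1(E')\ge 1-1/31-\epsilon\ge 9/10$, and since $f=g=0$ off $I$ the agreement gives $\tilde f=\tilde g=0$ and $\tilde h=h$ on $E'$. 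First I would extract first‑order information: at every density point $x$ of $E'$ there are points $y_j\to x$ in $E'$, and differentiability of $\tilde f$ with $\tilde f(y_j)=\tilde f(x)=0$ forces $\tilde f'(x)=0$, and likewise $\tilde g'(x)=0$. Let $D$ be the set of these density points, so $\mathcal L^1(D)\ge 9/10$ and $\tilde f=\tilde g=\tilde f'=\tilde g'=0$ on $D$.

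The heart of the argument is the estimate
\[
|h(x_1)-h(x_0)|\le 2M^2 (x_1-x_0)^4 \qquad (\star)
\]
for all $x_0<x_1$ in $D$. Writing $L=x_1-x_0$, Taylor expansion from $x_0$ together with $\tilde f(x_0)=\tilde f'(x_0)=0$ and $|\tilde f''|\le M$ gives $|\tilde f|\le \tfrac{M}{2}L^2$ and $|\tilde f'|\le ML$ on $[x_0,x_1]$, and similarly for $\tilde g$. Since $\widetilde\Gamma$ is $C^2$ horizontal, \eqref{HigherHoriz} with $k=1$ reads $\tilde h'=2(\tilde f'\tilde g-\tilde g'\tilde f)$, whence $|\tilde h'|\le 2M^2L^3$ on $[x_0,x_1]$; integrating and using $\tilde h(x_i)=h(x_i)$ yields $(\star)$.

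Finally I would contradict $(\star)$ using the vertical growth of $h$. Choose integers $m_n\to\infty$ with $m_n^3=o((4^nh_n)^2)$, possible since $4^nh_n\to\infty$ by \eqref{hlambda1}. Because $I_n$ consists of $\gtrsim 2^n$ intervals with centres on the grid $2^{-n}\mathbb Z$, a pigeonhole argument produces an interval of length $\asymp m_n2^{-n}$ meeting $\gtrsim m_n$ components of $I_n$; using $\mathcal L^1(D)\ge 9/10$ one slides its two endpoints into $D$ at the cost of a bounded enlargement, obtaining $x_0<x_1$ in $D$ with $x_1-x_0\le Cm_n2^{-n}$ and at least $cm_n$ whole components of $I_n$ inside $[x_0,x_1]$ (whole, as $x_0,x_1\notin I$). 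Since $h$ is nondecreasing and each such component contributes the increment $4h_n^2$ of Lemma \ref{hinc}, we get $h(x_1)-h(x_0)\ge 4cm_nh_n^2$. Feeding this into $(\star)$ gives $4cm_nh_n^2\le 2M^2(Cm_n2^{-n})^4$, i.e.\ $c'(4^nh_n)^2\le m_n^3$ for a constant $c'>0$, contradicting the choice of $m_n$ for $n$ large.

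The main obstacle is precisely the selection step above: packing many components of $I_n$ into a short interval while still placing both endpoints in the large but porous set $D$. I expect to handle this by a covering argument—partition $[0,1]$ into blocks of length $m_n2^{-n}$; averaging against the lower bound on the number of components shows a definite fraction of blocks are \emph{rich} (contain $\gtrsim m_n$ components), while only a small fraction can miss $D$ (as $\mathcal L^1([0,1]\setminus D)$ is small), so some rich block has both neighbours meeting $D$. The auxiliary lower bound on the number of components of $I_n$ (the centres $k/2^n$ excluded by earlier levels have small total measure, by \eqref{mI}) is a routine but necessary preliminary.
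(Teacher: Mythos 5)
Your proposal is correct and follows essentially the same strategy as the paper's proof: vanishing of the approximant's horizontal components and their first derivatives at Lebesgue density points of the agreement set off $I$, a quartic bound $|H(b)-H(a)|\leq C(b-a)^{4}$ between such points, the lower bound $4h_{n}^{2}$ per component from Lemma \ref{hinc}, and a measure/counting argument (the paper's Lemma \ref{goodpair}) locating a suitable pair of density points straddling a component, with the contradiction coming from $4^{n}h_{n}\to\infty$. The only differences are cosmetic: the paper additionally shows $F''=G''=0$ at density points and uses uniform continuity of $F'',G''$ where you use the global bound $M$, and it selects a pair straddling a \emph{single} component of $I_{n+1}$ rather than packing $m_{n}\to\infty$ components, so your block-averaging selection step and the constraint $m_{n}^{3}=o((4^{n}h_{n})^{2})$ are an unnecessary (though workable) complication, since $m_{n}=1$ already yields the contradiction.
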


We will prove Proposition \ref{noLusin} by contradiction. Suppose $\Gamma$ does have the $C^2$ horizontal Lusin approximation property. Fix $\theta>4/5+1/31$ and a $C^2$ horizontal curve $\widetilde{\Gamma}=(F,G,H)\colon [0,1]\to \bbH$ such that the set
\[E:=\{t\in [0,1]: \widetilde{\Gamma}(t)=\Gamma(t)\}\]
satisfies $\mathcal{L}^1(E)>\theta$. Since $\mathcal{L}^1(I)<1/31$ by \eqref{mI}, we have $\mathcal{L}^1(E\setminus I)>4/5$.

\begin{lemma}\label{zeros}
Suppose $x\in E\setminus I$ is a Lebesgue density point of $E\setminus I$. Then
\[F(x)=F'(x)=F''(x)=0 \quad \mbox{and} \quad G(x)=G'(x)=G''(x)=0.\]
\end{lemma}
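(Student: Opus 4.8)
The plan is to exploit that on $E$ the approximating curve $\widetilde\Gamma=(F,G,H)$ coincides exactly with $\Gamma=(f,g,h)$, together with the fact that $f$ and $g$ vanish identically on $[0,1]\setminus I$ by construction. First I would record the zeroth-order vanishing: since $x\in E\setminus I$ we have $x\notin I$, hence $f(x)=g(x)=0$, and since $x\in E$ the coincidence gives $F(x)=f(x)=0$ and $G(x)=g(x)=0$. Next I would extract a convenient sequence: a Lebesgue density point of $E\setminus I$ is in particular an accumulation point, so I can choose $y_n\in E\setminus I$ with $y_n\to x$ and $y_n\neq x$; for each such $y_n$, the membership $y_n\notin I$ forces $f(y_n)=g(y_n)=0$, while $y_n\in E$ forces $F(y_n)=f(y_n)=0$ and $G(y_n)=g(y_n)=0$.

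I would then read off the first derivatives from differentiability of the $C^2$ functions $F, G$: the difference quotient converges to $F'(x)$ regardless of the mode of approach, so testing along $y_n$ gives $F'(x)=\lim_n (F(y_n)-F(x))/(y_n-x)=0$, and likewise $G'(x)=0$. For the second derivatives I would use the second-order Taylor expansion with Peano remainder, valid since $F\in C^2$:
\[
F(y)=F(x)+F'(x)(y-x)+\tfrac12 F''(x)(y-x)^2+o(|y-x|^2).
\]
Having already shown $F(x)=F'(x)=0$, I substitute $y=y_n$, use $F(y_n)=0$, divide by $(y_n-x)^2\neq 0$, and let $n\to\infty$ to conclude $\tfrac12 F''(x)=0$; the identical computation applied to $G$ yields $G''(x)=0$.

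I expect no serious obstacle, as every step is a direct consequence of the coincidence $\widetilde\Gamma=\Gamma$ on $E$ and the smoothness of $\widetilde\Gamma$. The only point requiring care is the order of operations: one must establish $F'(x)=0$ before invoking the quadratic Taylor expansion, since it is precisely the vanishing of the linear term that isolates the coefficient $F''(x)$. I would also note that full Lebesgue density is stronger than strictly needed here---mere accumulation of $E\setminus I$ at $x$ suffices---but the density point hypothesis is the natural way to guarantee such approximating points and matches what the later arguments require.
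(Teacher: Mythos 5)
Your proposal is correct and follows essentially the same route as the paper: use the coincidence on $E$ and the vanishing of $f,g$ off $I$ to get $F=G=0$ at $x$ and along a sequence of points of $E\setminus I$ accumulating at $x$, then deduce the vanishing of the first and second derivatives from the $C^2$ regularity. The paper leaves that last implication as a one-line remark, whereas you spell it out via the difference quotient and the second-order Taylor expansion; your observation that mere accumulation (rather than full Lebesgue density) suffices here is also accurate.
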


\begin{proof}
Let $x$ be as in the statement of the lemma. Then $F(x)=f(x)$ because $x\in E$ and $f(x)=0$ because $x\notin I$; hence $F(x)=0$. Since $x$ is a Lebesgue density point of $E\setminus I$ there exist $x_{n}\in E\setminus I$ with $x_{n}\to x$. By the same argument as before we have $F(x_{n})=0$ for every $n$. Hence $x_{n}\to x$ and $F(x_{n})=0=F(x)$ for every $n$. Since $F$ is $C^2$ this implies $F'(x)=F''(x)=0$. The argument for $G$ is the same.
\end{proof}

Since $\widetilde{\Gamma}$ is $C^2$, $F''$ and $G''$ are uniformly continuous on $[0,1]$. Fix $\delta>0$ such that
\begin{equation}\label{uc}
|F''(x)-F''(y)|<1 \mbox{ and }|G''(x)-G''(y)|<1 \qquad \mbox{for }|x-y|<\delta.
\end{equation}

\begin{lemma}\label{smallarea}
Suppose $a,b\in E\setminus I$ are Lebesgue density points of $E\setminus I$ and $|b-a|<\delta$. Then
\[|H(b)-H(a)|\leq 4|b-a|^{4}.\]
\end{lemma}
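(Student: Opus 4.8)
The plan is to exploit that $\widetilde{\Gamma} = (F,G,H)$ is itself a $C^2$ horizontal curve, so the horizontal constraint \eqref{liftH'} holds \emph{everywhere} (not merely almost everywhere) and gives $H'(t) = 2(F'(t)G(t) - G'(t)F(t))$ for all $t$. Consequently, assuming $a < b$ without loss of generality,
\[
H(b) - H(a) = 2\int_a^b (F'G - G'F),
\]
and the whole problem reduces to bounding $F, F', G, G'$ pointwise on $[a,b]$.

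The key step is to extract strong pointwise estimates from Lemma \ref{zeros}, which tells us that $F(a)=F'(a)=F''(a)=0$ and likewise for $G$. The essential additional input is the uniform continuity estimate \eqref{uc}: since $F''(a)=0$ and every $t\in[a,b]$ satisfies $|t-a|<\delta$, we obtain $|F''(t)| = |F''(t)-F''(a)| < 1$ throughout $[a,b]$, and similarly $|G''(t)|<1$. Then Taylor's theorem with integral remainder about the point $a$, using $F(a)=F'(a)=0$, yields
\[
|F'(t)| = \left| \int_a^t F''(s)\dd s \right| \leq (b-a), \qquad |F(t)| = \left| \int_a^t (t-s)F''(s)\dd s \right| \leq \tfrac{1}{2}(b-a)^2
\]
for all $t\in[a,b]$, and identically for $G$.

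Combining these bounds gives $|F'G - G'F| \leq |F'|\,|G| + |G'|\,|F| \leq (b-a)^3$ on $[a,b]$, whence
\[
|H(b)-H(a)| \leq 2\int_a^b |F'G - G'F| \leq 2(b-a)^4 \leq 4(b-a)^4,
\]
which is the claimed inequality (with room to spare).

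I do not expect a serious obstacle here, since the argument is essentially a Taylor estimate. The one point that must be handled with care is the passage from the vanishing of $F''$ and $G''$ at the endpoint to a uniform bound on the entire interval; this is exactly the role of the choice of $\delta$ in \eqref{uc}, and it is why the lemma is restricted to $|b-a|<\delta$. Note also that this estimate only uses the vanishing of the derivatives at a \emph{single} endpoint, so the hypothesis that both $a$ and $b$ are density points is more than is strictly needed for this particular bound.
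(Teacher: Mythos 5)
Your proof is correct and follows essentially the same route as the paper: both arguments use horizontality of $\widetilde{\Gamma}$ to write $H(b)-H(a)$ as an integral and then apply the Taylor bounds $|F'|\leq b-a$, $|F|\leq (b-a)^2$ (and likewise for $G$) coming from Lemma \ref{zeros} together with \eqref{uc}. The only cosmetic difference is that the paper first uses $F(a)=F(b)=0$ to rewrite $2\int_a^b(F'G-G'F)$ as $4\int_a^b F'G$, whereas you bound the two terms separately, which in fact yields the slightly sharper constant $2$ and, as you observe, needs the vanishing conditions only at one endpoint.
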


\begin{proof}
Since $F(a)=F(b)=0$ by Lemma \ref{zeros} and $\widetilde{\Gamma}$ is horizontal, we have
\[H(b)-H(a)=2\int_{a}^{b}(F'G-G'F)=4\int_{a}^{b}F'G.\]
We have $F(a)=F'(a)=F''(a)=0$ by Lemma \ref{zeros} and $|F''(t)-F''(a)|<1$ for $t\in [a,b]$ by \eqref{uc}. Hence $|F'(t)|\leq b-a$ and $|F(t)|\leq (b-a)^{2}$ for $t\in [a,b]$. The same estimates hold for $G$. This gives
\[|H(b)-H(a)| \leq 4(b-a)(b-a)(b-a)^{2}=4(b-a)^{4}.\]
\end{proof}


\begin{lemma}\label{goodpair}
For all $n\in \bbN$, there exists a pair $x, y\in (0,1)$ with the following properties:
\begin{itemize}
\item $x, y\in E\setminus I$ and are Lebesgue density points of $E\setminus I$,
\item $|x-y|\leq 1/2^{n}$,
\item $x, y$ are on opposite sides of an interval chosen in the construction of $I_{n+1}$.
\end{itemize}
\end{lemma}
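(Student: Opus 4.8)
We need to find, for each $n$, a pair of points $x, y$ that are Lebesgue density points of $E \setminus I$, lie within $1/2^n$ of each other, and sit on opposite sides of some interval chosen in the construction of $I_{n+1}$. The point of this lemma (in the larger argument toward Proposition 3.20) is surely that such a pair straddles an interval $J \subset I_{n+1}$ across which $h$ jumps by $4h_{n+1}^2$ (by Lemma 3.19), so $|H(y) - H(x)| = |h(y) - h(x)|$ is forced to be large; but Lemma 3.23 says it must be at most $4|x-y|^4$. Since $|x-y|$ can be taken comparable to $\lambda_{n+1}$ or to the interval spacing, and $4^n h_n \to \infty$, these two estimates will collide and give the contradiction. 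So the present lemma is the combinatorial/measure-theoretic heart that produces the straddling pair.

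**The plan.** The plan is to fix $n$ and look at the intervals $J$ making up $I_{n+1}$. Each such $J$ has center $k/2^{n+1}$ with $0 < k < 2^{n+1}$ and radius $w_{n+1}$, and crucially $J$ does not meet $I_1 \cup \cdots \cup I_n$. First I would isolate, just to the left and just to the right of a chosen $J$, two short collection of candidate subintervals of $[0,1] \setminus I$ of length comparable to the gap between $J$ and the next interval, and argue that at least one such $J$ has substantial mass of $E \setminus I$ on both its immediate left and immediate right. The mechanism: since $\mathcal{L}^1(E \setminus I) > 4/5$ is very close to full measure, and $I_{n+1}$ consists of at most $2^n$ tiny intervals of total length $\leq 2^{n+1} w_{n+1} \leq 2^{n+1}/2^{6(n+1)}$, the complement $E \setminus I$ must be dense near most of these $J$'s. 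More precisely, I would partition $[0,1]$ into the dyadic-type cells determined by the centers $k/2^{n+1}$ and use that $E \setminus I$ fills more than $4/5$ of the whole to guarantee a cell on each side of some $J$ carrying positive $E\setminus I$-measure, hence containing a density point of $E \setminus I$.

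**Executing the pairing.** Concretely, for each admissible $k$ let $L_k$ and $R_k$ denote the intervals immediately left and right of the $J$ centered at $k/2^{n+1}$, each of length on the order of the local spacing (at most $1/2^{n+1}$ on each side, so the resulting pair satisfies $|x-y| \leq 1/2^n$). I would bound the number of $J$'s for which either neighboring cell fails to contain a density point of $E\setminus I$: each failure forces an entire cell of length $\gtrsim$ (spacing minus $2w_{n+1}$) to be essentially disjoint from $E \setminus I$, i.e.\ to lie in $I \cup (\text{non-density points})$, a set of measure at most $\mathcal{L}^1(I) + 0 < 1/31$. Since the total available measure outside $E \setminus I$ is less than $1 - 4/5 = 1/5$, while the cells are comparatively large, at most a controlled fraction of the $J$'s can be ``bad'', leaving at least one ``good'' $J$ with density points $x$ (on the left) and $y$ (on the right). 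By construction $x,y \in E\setminus I$, are density points, straddle $J \subset I_{n+1}$, and $|x-y| \leq 1/2^n$, which is exactly what is claimed.

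**The main obstacle.** The delicate point is the bookkeeping that converts the global measure estimate $\mathcal{L}^1(E\setminus I) > 4/5$ into a \emph{local} statement guaranteeing density points on \emph{both} sides of a \emph{single} interval of $I_{n+1}$ at scale $n$. One must be careful that $E \setminus I$ could conceivably concentrate its mass away from $I_{n+1}$ while leaving the immediate neighborhoods of every $J \subset I_{n+1}$ empty; ruling this out requires that the neighboring cells have total length close to $1$ (since $w_{n+1}$ is negligible) so that their union cannot avoid a $4/5$-measure set. I would also need density points rather than mere points of $E\setminus I$, but since almost every point of a positive-measure set is a density point (Lebesgue density theorem), any cell with positive $E\setminus I$-measure automatically supplies one, so this costs nothing. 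The threshold choice $\theta > 4/5 + 1/31$ is presumably calibrated precisely so the surplus $\mathcal{L}^1(E\setminus I) - 4/5$ survives subtraction of the ``bad cell'' measure, and I would track that arithmetic explicitly to close the argument.
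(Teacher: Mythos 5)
Your proposal is correct and follows essentially the same route as the paper: partition $[0,1]$ into the $2^{n}$ cells $[L/2^{n},(L+1)/2^{n}]$, observe that a cell whose midpoint $(2L+1)/2^{n+1}$ carries a chosen interval of $I_{n+1}$ but admits no straddling pair of density points must lose at least a third of its length from $E\setminus I$, bound the total length of cells whose midpoints are not chosen by $4\sum_{i}2^{i}w_{i}\leq 4/31$, and conclude via $\mathcal{L}^{1}(E\setminus I)\leq 2/3+4/31\leq 4/5$, contradicting $\mathcal{L}^{1}(E\setminus I)>4/5$. The one detail to pin down when you ``track the arithmetic'' is to restrict attention to the odd-indexed centers $(2L+1)/2^{n+1}$ so the left/right half-cells attached to distinct intervals $J$ are genuinely disjoint (and note the relevant bound for a cell devoid of density points is $1-\mathcal{L}^{1}(E\setminus I)<1/5$, not $\mathcal{L}^{1}(I)<1/31$ as written in passing), but the counting closes exactly as in the paper.
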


\begin{proof}

We argue by contradiction. Assume there exists $n\in \bbN$ for which there is no pair $x$ and $y$ with the desired properties. Fix such an $n$. We consider intervals of the form $[L/2^{n}, (L+1)/2^{n}]$ for different integers $0\leq L<2^{n}$.

Suppose the interval $[L/2^{n}, (L+1)/2^{n}]$ has midpoint $(2L+1)/2^{n+1}$ which is the center of an interval $J$ chosen in the construction of $I_{n+1}$. The interval $J$ separates $[L/2^{n}, (L+1)/2^{n}]\setminus J$ into two subintervals $J_{1}$ and $J_{2}$ each of measure greater than $(1/3)(1/2^{n})$. Since there is no pair $x, y\in (0,1)$ with the properties in the statement of the lemma, in particular there is no such pair in the interval $[L/2^{n}, (L+1)/2^{n}]$. Hence either $J_{1}$ or $J_{2}$ does not contain any points of $E\setminus I$ which are Lebesgue density points of $E\setminus I$. Hence we have
\[\mathcal{L}^1((E\setminus I)\cap [L/2^{n}, (L+1)/2^{n}])\leq (2/3)\mathcal{L}^1([L/2^{n}, (L+1)/2^{n}]).\]

We now estimate the total measure of those intervals $[L/2^{n}, (L+1)/2^{n}]$ whose midpoint $(2L+1)/2^{n+1}$ is not chosen in the construction of $I_{n+1}$. Fix such an interval $[L/2^{n}, (L+1)/2^{n}]$. Then
\[B((2L+1)/2^{n+1},w_{n+1})\cap (I_{1}\cup \cdots \cup I_{n}) \neq \varnothing.\]
Different intervals of the form $B(k/2^{n+1},w_{n+1})$ are separated by a distance
\[1/2^{n+1}-2w_{n+1}\geq 1/2^{n+1}-2/2^{6n}\geq 1/2^{n+2}.\]
If an interval of length $T$ intersects $K$ intervals of the form $B(k/2^{n+1},w_{n+1})$ then we must have $T\geq K/2^{n+2}$, so $K\leq 2^{n+2}T$. The set $I_{i}$ is a union of $2^{i-1}$ intervals of length $2w_{i}$. Hence the number of intervals $B(k/2^{n+1},w_{n+1})$ which intersect $I_{1}\cup \cdots \cup I_{n}$ can be estimated by
\[\sum_{i=1}^{n}2^{i-1}2^{n+2}2w_{i}=2^{n+2}\sum_{i=1}^{n}2^{i}w_{i}.\]
Hence the total measure of all those intervals $[L/2^{n}, (L+1)/2^{n}]$ whose midpoint is not chosen in the construction of $I_{n+1}$ can be estimated by 
\[(1/2^n)2^{n+2}\sum_{i=1}^{n}2^{i}w_{i}=4\sum_{i=1}^{n}2^{i}w_{i} \leq 4\sum_{i=1}^{\infty}2^{i}/2^{6i}=4/31.\]

Let $G$ be the collection of those integers $L$ such that the midpoint of $[L/2^{n}, (L+1)/2^{n}]$ is the center of an interval chosen in the construction of $I_{n+1}$. Let $B$ be the collection of those integers $L$ such that the midpoint of $[L/2^{n}, (L+1)/2^{n}]$ is not chosen.
We estimate as follows
\begin{align*}
\mathcal{L}^1(E\setminus I) &= \sum_{L\in G} \mathcal{L}^1((E\setminus I)\cap [L/2^{n}, (L+1)/2^{n}])\\
&\qquad \qquad + \sum_{L\in B} \mathcal{L}^1((E\setminus I)\cap [L/2^{n}, (L+1)/2^{n}])\\
&\leq \sum_{L\in G} (2/3)\mathcal{L}^1([L/2^{n}, (L+1)/2^{n}]) + \sum_{L\in B} \mathcal{L}^1([L/2^{n}, (L+1)/2^{n}])\\
&\leq 2/3 + 4/31\\
&\leq 4/5.
\end{align*}
Since $\mathcal{L}^1(E\setminus I)>4/5$ we obtain a contradiction which proves the lemma.
\end{proof}

We now derive a contradiction which proves Proposition \ref{noLusin}. Recall $\delta>0$ from \eqref{uc} and the fact that $4^{n}h_{n}\to \infty$. Using Lemma \ref{goodpair}, we may fix $n$ with $1/2^{n}<\delta$ and $4^{n}h_{n+1}\geq 2$ for which there exist points $x, y\in (0,1)$ with $x<y$ such that
\begin{itemize}
\item $x, y\in E\setminus I$ and are Lebesgue density points of $E\setminus I$,
\item $|x-y|\leq 1/2^{n}$,
\item $x, y$ are on opposite sides of an interval chosen in the construction of $I_{n+1}$.
\end{itemize}
Since $|x-y|\leq 1/2^{n}<\delta$ and $x,y\in E\setminus I$ are Lebesgue density points of $E\setminus I$, we have by Lemma \ref{smallarea}
\begin{equation}\label{smallH}
|H(y)-H(x)|\leq 4|y-x|^{4}\leq 4/16^{n}.
\end{equation}
Since $x< y$ are on opposite sides of an interval chosen in the construction of $I_{n+1}$, we have by Lemma \ref{hinc}
\begin{equation}\label{bigh}
h(y)-h(x)\geq 4h_{n+1}^{2}.
\end{equation}
Since $x,y\in E$ we have $H(y)-H(x)=h(y)-h(x)$. Combining this with \eqref{smallH} and \eqref{bigh} gives $h_{n+1}^{2} \leq 1/16^{n}$ or equivalently $4^{n}h_{n+1}\leq 1$. This contradicts the choice of $n$ with $4^{n}h_{n+1}\geq 2$, proving Proposition \ref{noLusin} and hence proving Theorem \ref{thm2}.

\end{document}